\newcommand{\wrt}{with respect to}
\newcommand{\aF}{\mathfrak{a} }
\newcommand{\bF}{\mathfrak{b} }
\newcommand{\n}{\mathfrak{n} }
\newcommand{\m}{\mathfrak{m} }
\newcommand{\cB}{\mathcal{B}}
\newcommand{\cO}{\mathcal{O}}
\newcommand{\cD}{\mathcal{D}}
\newcommand{\bP}{\mathbb{\partial}}
\newcommand{\rt}{\rightarrow}
\newcommand{\ov}{\overline}
\newcommand{\ba}{\mathbf{a}}
\newcommand{\image}{\operatorname{image}}
\newcommand{\hh}{\operatorname{ht}}
\newcommand{\coker}{\operatorname{coker}}
\newcommand{\Spec}{\operatorname{Spec}}
\newcommand{\mIso}{\operatorname{mIso}}
\newcommand{\soc}{\operatorname{soc}}
\newcommand{\Hom}{\operatorname{Hom}}
\newcommand{\Ass}{\operatorname{Ass}}
\newcommand{\Supp}{\operatorname{Supp}}
\theoremstyle{plain}
\newtheorem{thm}{Theorem}
\newtheorem{theorem}{Theorem}[section]
\newtheorem{corollary}[theorem]{Corollary}
\newtheorem{lemma}[theorem]{Lemma}
\newtheorem{proposition}[theorem]{Proposition}
\theoremstyle{definition}
\newtheorem{remark}[theorem]{Remark}
\theoremstyle{remark}
\numberwithin{equation}{theorem}
\begin{document}

\title[De Rahm]{De Rahm cohomology of local cohomology modules}
 \author{Tony~J.~Puthenpurakal}
\date{\today}
\address{Department of Mathematics, Indian Institute of Technology Bombay, Powai, Mumbai 400 076}

\email{tputhen@math.iitb.ac.in}

\subjclass{Primary 13D45; Secondary 13N10 }
\keywords{local cohomology, associated primes, D-modules, Koszul homology}

\begin{abstract}
Let $K$ be a field of characteristic zero, $R = K[X_1,\ldots,X_n]$ and let $I$ be an ideal in $R$. Let $A_n(K)  = K<X_1,\ldots,X_n, \partial_1, \ldots, \partial_n>$  be the $n^{th}$ Weyl algebra over $K$. By a result due to Lyubeznik  the local cohomology modules $H^i_I(R)$ are holonomic $A_n(K)$-modules for each $i \geq 0$.
In this article we compute the De Rahm cohomology modules $H^*(\partial_1,\ldots,\partial_n ; H^*_I(R))$ for certain classes of ideals.
\end{abstract}

\maketitle

\section*{Introduction}
Let $K$ be a field of characteristic zero, $R = K[X_1,\ldots,X_n]$ and let $I$ be an ideal in $R$. For $i \geq 0$
let $H^i_I(R)$ be the $i^{th}$-local cohomology module of $R$ with respect to $I$. Let $A_n(K)  = K<X_1,\ldots,X_n, \partial_1, \ldots, \partial_n>$  be the $n^{th}$ Weyl algebra over $K$. By a result due to Lyubeznik, see \cite{Ly}, the local cohomology modules $H^i_I(R)$ are finitely generated $A_n(K)$-modules for each $i \geq 0$. In fact they
 are \textit{holonomic} $A_n(K)$ modules. In \cite{B} holonomic $A_n(K)$ modules are denoted as $\cB_n(K)$, the  \textit{Bernstein} class of left $A_n(K)$ modules.

Let $N$ be a left $A_n(K)$ module. Now $\bP = \partial_1,\ldots,\partial_n$ are pairwise commuting $K$-linear maps. So we can consider the De Rahm complex
$K(\bP;N)$. Notice that the De Rahm cohomology modules  $H^*(\bP;N)$ are in general only $K$-vector spaces. They are finite dimensional if $N$ is holonomic; see \cite[Chapter 1, Theorem 6.1]{B}.
 In particular $H^*(\bP;H^*_I(R))$ are finite dimensional $K$-vector spaces. In this paper we compute it for a few classes of ideals.

 Throughout let $K \subseteq L$ where $L$ is an algebraically closed field. Let $A^n(L)$ be the affine $n$-space over $L$. If $I$ is an ideal in $R$
 then
 \[
 V(I)_L = \{ \mathbf{a} \in A^n(L) \mid f(\mathbf{a}) = 0; \ \text{for all} \ f \in I \};
 \]
 denotes the variety of $I$ in $A^n(L)$. By Hilbert's Nullstellensatz $V(I)_L$ is always non-empty. We say that an ideal $I$ in $R$ is zero-dimensional  if $\ell (R/I)$ is finite and non-zero (here $\ell(-)$ denotes length). This is equivalent to saying that $V(I)_L$ is a finite non-empty set. If $S$ is a finite set then let $\sharp S$ denote the number of elements in $S$.
 Our first result is
 \begin{thm}
 Let $I \subset R$ be a zero-dimensional ideal. Then $H^i(\bP; H^n_I(R)) = 0$ for $i < n$ and
 \[
 \dim_K H^n(\bP ; H^n_I(R)) = \sharp V(I)_L
 \]
 \end{thm}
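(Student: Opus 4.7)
The plan is to decompose $H^n_I(R)$ as a direct sum indexed by the points of $V(I)_L$ and compute the De Rham cohomology of each summand separately.

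\textbf{Step 1 (Reduction to the algebraically closed case).} Set $S = R \otimes_K L$. Since $K \to L$ is flat, local cohomology commutes with this base change, giving $H^n_I(R) \otimes_K L \cong H^n_{IS}(S)$. Because the Koszul complex $K(\bP;-)$ is built from $K$-linear maps, its cohomology also commutes with $-\otimes_K L$. Therefore $\dim_K H^i(\bP; H^n_I(R)) = \dim_L H^i(\bP; H^n_{IS}(S))$ and $\#V(I)_L = \#V(IS)_L$, so I may assume $K = L$ is algebraically closed.

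\textbf{Step 2 (Mayer--Vietoris decomposition).} With $K$ algebraically closed and $I$ zero-dimensional, $\sqrt{I} = \bigcap_{a \in V(I)_L} \mathfrak{m}_a$ where $\mathfrak{m}_a = (X_1-a_1,\ldots,X_n-a_n)$. These maximal ideals are pairwise coprime and $H^i_R(R) = 0$ for all $i$, so iterating Mayer--Vietoris for local cohomology yields an $A_n(K)$-module decomposition
\[ H^n_I(R) = H^n_{\sqrt{I}}(R) = \bigoplus_{a \in V(I)_L} H^n_{\mathfrak{m}_a}(R), \]
and De Rham cohomology distributes over this sum. It remains to show $H^i(\bP; H^n_{\mathfrak{m}_a}(R))$ vanishes for $i < n$ and is one-dimensional for $i = n$.

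\textbf{Step 3 (Pointwise calculation).} The substitution $X_i \mapsto X_i - a_i$ is an automorphism of $A_n(K)$ fixing each $\partial_i$, so I may take $a = 0$. One has the standard identification $H^n_{(X_1,\ldots,X_n)}(R) \cong A_n(K)/A_n(K)(X_1,\ldots,X_n)$; using $[\partial_i, X_j] = \delta_{ij}$ to move all $X_i$'s to the right shows that this quotient is free of rank one as a left $K[\partial_1,\ldots,\partial_n]$-module, generated by the image of $1$. Since $\partial_1,\ldots,\partial_n$ is a regular sequence on $K[\partial_1,\ldots,\partial_n]$, Koszul homology vanishes in positive degree with $H_0 = K$, and the duality between the chain and cochain Koszul complexes gives $H^i(\bP;H^n_{(X)}(R)) = 0$ for $i < n$ and $H^n = K$. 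Summing over the points of $V(I)_L$ finishes the proof.

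The main obstacle I foresee is verifying the functorial properties cleanly --- that local cohomology and Koszul cohomology both commute with the flat extension $-\otimes_K L$, and that Mayer--Vietoris produces a decomposition of $A_n(K)$-modules (not merely of $R$-modules) --- rather than the pointwise calculation, which becomes formal once the setup is in place.
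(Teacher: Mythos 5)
Your proposal is correct, and Steps 1 and 2 (flat base change to $L$, then the Mayer--Vietoris/comaximality decomposition of $H^n_I(R)$ into the $A_n(K)$-summands $H^n_{\m_a}(R)$) coincide with the paper's argument. Where you genuinely diverge is the pointwise computation in Step 3. The paper proves the local statement by working with the explicit monomial basis of the injective hull $E = H^n_{(X_1,\ldots,X_n)}(R)$: it computes $H_*(\partial_n;E_n)$ by hand and then runs an induction on the number of variables through the Koszul short exact sequences (Lemmas \ref{E-basic-Lem} and \ref{E-Lem}, culminating in Theorem \ref{E-Th}). You instead use the cyclic presentation $E \cong A_n(K)/A_n(K)(X_1,\ldots,X_n)$, observe via the PBW normal form that this quotient is free of rank one over $K[\partial_1,\ldots,\partial_n]$, and conclude by regularity of the sequence $\partial_1,\ldots,\partial_n$ on that polynomial ring. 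Your route is shorter and more structural; note that the paper itself invokes the isomorphism $A_n(K)/A_n(K)\m \cong H^n_{\m}(R)$ later (in the proof of Theorem 2), so the identification you rely on is consistent with the paper's toolkit. What the paper's longer computation buys is the intermediate partial-Koszul result $H_i(\partial_c,\ldots,\partial_n;E_n)$, which is reused in Section 4 to handle $H^{n-1}_P(R)$ for Theorem 2; your argument does not produce that by-product, but it is not needed for Theorem 1.
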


  For homogeneous ideals it is best to consider their vanishing set in a projective case. Throughout let $P^{n-1}(L)$ be the projective $n-1$ space over $L$. We assume $n \geq 2$. Let $I$ be a homogeneous ideal in $R$. Let
  \[
  V^*(I)_L = \{ \mathbf{a} \in P^{n-1}(L) \mid f(\mathbf{a}) = 0; \ \text{for all} \ f \in I \};
  \]
  denote the variety of $I$ in $P^{n-1}(L)$.   Note that $V^*(I)_L$ is a non-empty finite set if and only if $\hh(I) = n-1$. We prove
  \begin{thm}
  Let $I \subset R$ be a height $n-1$ homogeneous ideal. Then
 \begin{align*}
 \dim_K H^n(\bP ; H^{n-1}_{I}(R)) &= \sharp V^*(I)_L   - 1, \\
 \dim_K H^{n-1}(\bP ; H^{n-1}_{I}(R)) &= \sharp V^*(I)_L, \\
 H^i(\bP; H^{n-1}_{I}(R)) &= 0 \ \text{for} \ i \leq n-2.
 \end{align*}
  \end{thm}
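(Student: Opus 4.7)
The plan is to set up an induction on the number $s$ of minimal primes of $I$ and repeatedly invoke Mayer--Vietoris to reduce the computation of $H^{n-1}_I(R)$ to that of $H^{n-1}_{P}(R)$ for individual minimal primes $P$, together with a single copy of $H^n_{\mathfrak{m}}(R)$. Before beginning I would make two standard reductions: local cohomology is insensitive to passing to the radical, and both local cohomology and the Koszul functor $K(\bP;-)$ commute with the flat base change $K \subseteq L$, so I may assume $K = L$ is algebraically closed and $I = \sqrt{I} = P_1 \cap \cdots \cap P_s$ where each $P_i$ is a homogeneous height $n-1$ prime corresponding to a distinct $K$-point of $P^{n-1}(K)$, hence $s = \sharp V^*(I)_L$. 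After a suitable linear change of coordinates each $P_i$ takes the form $(Y_2,\ldots,Y_n)$ for appropriate linear forms.

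For the base case $s = 1$, take $P = (X_2,\ldots,X_n)$; then $H^{n-1}_P(R) \cong K[X_1] \otimes_K M_0$ with $M_0 = H^{n-1}_{(X_2,\ldots,X_n)}(K[X_2,\ldots,X_n])$. The De Rham complex on this tensor product factors, and Künneth gives
\[
H^*(\bP; H^{n-1}_P(R)) \cong H^*(\partial_1; K[X_1]) \otimes_K H^*(\partial_2,\ldots,\partial_n; M_0).
\]
The first factor is $K$ in degree $0$ (the constants) and $0$ in degree $1$ (every polynomial is a derivative in characteristic zero), while iterated Künneth applied to $M_0 \cong \bigotimes_{i \geq 2} K[X_i^{-1}]X_i^{-1}$ places the second factor as $K$ in degree $n-1$. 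The product is $K$ in degree $n-1$ and zero elsewhere, matching the claim for $s=1$.

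For the inductive step $s \geq 2$, write $I = P_1 \cap J$ with $J = P_2 \cap \cdots \cap P_s$. Two distinct lines through the origin meet only there, so $P_1 + J$ is $\mathfrak{m}$-primary and $H^i_{P_1+J}(R) = 0$ for $i < n$. Moreover, by (the graded analog of) Hartshorne--Lichtenbaum, $H^n_{P_1}(R) = H^n_J(R) = H^n_I(R) = 0$ since the varieties of $P_1$, $J$, $I$ all have dimension $\geq 1$. The Mayer--Vietoris sequence thus collapses to the short exact sequence of $A_n(K)$-modules
\[
0 \to H^{n-1}_{P_1}(R) \oplus H^{n-1}_J(R) \to H^{n-1}_I(R) \to H^n_{\mathfrak{m}}(R) \to 0.
\]
Applying $K(\bP;-)$ and using the base case, the inductive hypothesis $\dim H^{n-1}(\bP; H^{n-1}_J(R)) = s-1$, $\dim H^n(\bP; H^{n-1}_J(R)) = s-2$, and the Künneth computation $H^i(\bP; H^n_{\mathfrak{m}}(R)) = K$ for $i = n$ and $0$ otherwise, the resulting long exact sequence forces $H^i(\bP; H^{n-1}_I(R)) = 0$ for $i \leq n-2$ and yields
\[
\dim_K H^{n-1}(\bP; H^{n-1}_I(R)) = 1 + (s-1) = s, \qquad \dim_K H^n(\bP; H^{n-1}_I(R)) = (s-2) + 1 = s-1,
\]
as desired.

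The main technical hurdle is guaranteeing the short exact sequence above: one needs both the $\mathfrak{m}$-primary hypothesis on pairwise sums (immediate from distinctness of the projective points) and the vanishing $H^n_J(R) = H^n_I(R) = 0$ in the graded polynomial-ring setting. A secondary delicate point is confirming that the Mayer--Vietoris sequence is exact as a sequence of $A_n(K)$-modules, so that $K(\bP;-)$ produces the long exact sequence used above; this follows from the naturality of the sequence and the fact that its maps arise from localization, which commutes with the $A_n(K)$-action.
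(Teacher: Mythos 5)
Your overall strategy --- flat base change to an algebraically closed field, writing $\sqrt{I}$ as an intersection of height $n-1$ linear primes, an explicit computation for a single such prime, and induction on the number of points via the Mayer--Vietoris short exact sequence
\[
0 \to H^{n-1}_{P_1}(R) \oplus H^{n-1}_J(R) \to H^{n-1}_I(R) \to H^n_{\m}(R) \to 0
\]
--- is exactly the paper's. Your base case via K\"unneth is a legitimate substitute for the paper's iterated Koszul computation (Lemmas \ref{P-basic}, \ref{P-Lem}, Theorem \ref{P-Th}), and the dimension bookkeeping in the resulting long exact sequence is correct.

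The genuine gap is the point you dismiss as "secondary": the claim that the surjection $H^{n-1}_I(R) \to H^n_\m(R)$ is $A_n(K)$-linear. That map is the connecting homomorphism $\delta^{n-1}$ of Mayer--Vietoris, and it does \emph{not} "arise from localization"; only the two interior maps $\rho$ and $\pi$ are induced by inclusions of torsion functors, and those are precisely the maps whose $A_n(K)$-linearity the paper verifies (Lemma \ref{theta} and \ref{Mayer-2}). The $A_n(K)$-linearity of $\delta$ is in fact true, but it requires a separate argument, and the paper explicitly declines to use it. Without it, all you know a priori is that $C = \coker\alpha$ is an $A_n(K)$-module which is isomorphic to $H^n_\m(R)$ \emph{as an $R$-module}; you are not yet entitled to substitute $H^*(\bP;H^n_\m(R))$ for $H^*(\bP;C)$ in your long exact sequence. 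The paper closes this gap with a socle argument: $\soc_R(C)$ is one-dimensional, a nonzero socle element $e$ gives an $A_n(K)$-linear map $A_n(K)/A_n(K)\m \to C$ whose source is $\cong H^n_\m(R)$ as an $A_n(K)$-module; this map is injective because it is injective on socles, and surjective because $C$ is supported only at $\m$ and the $R$-module complement of the image has zero socle. You need either this argument or an honest proof that $\delta$ commutes with the $\partial_i$; the one-line appeal to naturality does not supply it.
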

  
Altough I am unable to find a reference it is known that if
 $M$ is holonomic then $H^i(\bP, M) = 0$ for $i < n - \dim M$; here $\dim M = $ dimension of support of $M$. However the known proof uses sophisticated techniques like derived categories. We give an elementary proof of it.
\begin{thm}
Let $M$ be a holonomic $A_n(K)$-module. Then  $H^i(\bP, M) = 0$ for $i < n - \dim M$.
\end{thm}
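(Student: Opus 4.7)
The plan is to induct on $n$. The base case $n = 0$ is vacuous. For the inductive step, let $M$ be holonomic of dimension $d$; we may assume $d < n$, as $d = n$ is trivial.

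The central step is to find, after a $K$-linear change of basis in $\partial_1, \ldots, \partial_n$ (with the dual change in $x_1, \ldots, x_n$ preserving the Weyl algebra relations), a new first derivative that acts injectively on $M$, and which we still call $\partial_1$. To produce such a change of basis I would pass to the Bernstein associated graded $\operatorname{gr} M$, a finitely generated graded $K[x, \xi]$-module of Krull dimension $n$ whose support in $\Spec K[x, \xi]$ projects onto $\Supp M \subseteq \mathbb{A}^n$. I claim no associated prime $\mathfrak{p}$ of $\operatorname{gr} M$ contains the ideal $(\xi_1, \ldots, \xi_n)$: any such $\mathfrak{p}$ has $V(\mathfrak{p})$ inside the zero section $\mathbb{A}^n_x \times \{0\}$; if $\dim V(\mathfrak{p}) = n$, then $V(\mathfrak{p})$ equals the whole zero section and its projection forces $d = n$, a contradiction, while embedded associated primes of smaller dimension are ruled out after reducing to pure $M$ via the dimension-of-support filtration $M_{(k)} = \{m \in M : \dim \Supp(A_n m) \leq k\}$ and the long exact sequences in De Rham cohomology. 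Prime avoidance then produces a linear form $\eta = \sum c_i \xi_i$ with $c_i \in K$ that is a non-zero-divisor on $\operatorname{gr} M$; lifting to $\sum c_i \partial_i \in A_n(K)$ and invoking a standard filtration argument shows this operator is injective on $M$, and a linear change of basis renames it to $\partial_1$.

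With $\partial_1$ injective on $M$, set $N = M/\partial_1 M$. Since $\partial_1$ commutes with the remaining generators $x_2, \ldots, x_n, \partial_2, \ldots, \partial_n$ of $A_n$, $N$ inherits an $A_{n-1}(K)$-module structure and is holonomic of dimension at most $d$, either by $D$-module pushforward along $\mathbb{A}^n \to \mathbb{A}^{n-1}$ or directly from a good filtration. Now view $K^\bullet(\bP; M)$ as the total complex of the double Koszul complex in $\partial_1$ and $\partial_2, \ldots, \partial_n$; injectivity of $\partial_1$ makes $(M \xrightarrow{\partial_1} M)$ quasi-isomorphic to $N$ placed in cohomological degree $1$, and the associated spectral sequence collapses to yield
\[
H^i(\bP; M) \cong H^{i-1}(\partial_2, \ldots, \partial_n; N).
\]
The inductive hypothesis applied to the holonomic $A_{n-1}(K)$-module $N$ gives $H^{i-1}(\partial_2, \ldots, \partial_n; N) = 0$ for $i - 1 < (n - 1) - \dim N$, which by $\dim N \leq d$ yields $H^i(\bP; M) = 0$ for $i < n - d$, completing the induction.

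The hard part is controlling the associated primes of $\operatorname{gr} M$ to guarantee the existence of the injective $\partial_1$: minimal primes are handled cleanly by Bernstein's inequality, but ruling out embedded primes containing $(\xi)$ requires the purity reduction via the dimension-of-support filtration described above.
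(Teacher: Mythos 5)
Your inductive skeleton (split off one derivative, pass to an $A_{n-1}(K)$-module, induct on $n$) is the same as the paper's, but your technical engine is different, and that is where the gap lies. You want a linear form $\eta=\sum c_i\partial_i$ acting \emph{injectively} on $M$, obtained by choosing $\sum c_i\xi_i$ to be a non-zerodivisor on $\operatorname{gr}M$. Prime avoidance requires $\sum c_i\xi_i$ to lie outside \emph{every} associated prime of $\operatorname{gr}M$, and your argument only controls the minimal ones: Bernstein's inequality bounds the dimension of the components of the characteristic variety but says nothing about embedded primes, and the set of embedded primes of $\operatorname{gr}M$ is not an invariant of $M$ --- it depends on the chosen good filtration. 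Your proposed repair, ``reduce to pure $M$ via the filtration $M_{(k)}$,'' does not close this: purity of $M$ with respect to the dimension-of-support filtration of the $D$-module does not imply that $\operatorname{gr}M$ has no embedded primes for the filtration you happen to be using, and you give no argument that an embedded prime cannot contain $(\xi_1,\dots,\xi_n)$. Until this is supplied, the injective $\partial_1$ --- the hinge of your whole argument --- is not known to exist. (The rest is sound: granting injectivity, the quasi-isomorphism $K_\bullet(\bP;M)\simeq K_\bullet(\partial_2,\dots,\partial_n;M/\partial_1M)$, the bound $\dim(M/\partial_1 M)\le\dim M$, and the induction do give the theorem.)

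The statement you need is in fact true, and the paper's key lemma is the correct substitute for your $\operatorname{gr}$-argument. Theorem 7.1 shows, for a \emph{simple} holonomic $C$ with $\Ass_R(C)=\{P\}$, that $H_1(\partial_n;C)\neq 0$ forces $P=(P\cap R_{n-1})R$, i.e.\ $V(P)$ is a cylinder in the $x_n$-direction; the proof is elementary ($\partial_n$ stabilizes the annihilator of a kernel element, and Lemma 7.2 together with Corollary 7.3 converts this into $P=QR$). Since $\dim M<n$ forces every composition factor to have $P_i\neq 0$, and the directions in which the proper subvariety $V(P_i)$ is a cylinder form a proper linear subspace, a generic direction kills $H_1$ on every factor and hence on $M$; this is how one could legitimately produce your injective operator. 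Note, though, that the paper never needs injectivity: it runs the induction keeping both $H_0(\partial_n;M)$ and $H_1(\partial_n;M)$, and uses Theorem 7.1 only to prove the dimension drop $\dim H_1(\partial_n;M)\le\dim M-1$ (Lemma 7.8), which is all the long exact sequence of Koszul homology requires. I would advise you either to adopt that kernel/cokernel bookkeeping, or to justify the existence of your injective operator by the Theorem 7.1 route rather than through associated primes of $\operatorname{gr}M$.
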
 

The advantage of our proof is that it can also be easily generalized to prove analogous results for power series rings and rings of convergent power series rings over $\mathbb{C}$. To the best of my knowledge this is a new result.
\begin{thm}
Let $\cO_n$ be the ring $K[[X_1,\ldots,X_n]]$ or $\mathbb{C}\{\{x_1,\ldots,x_n\}\}$. Let $\cD_n = \cO_n[\partial_1,\ldots,\partial_n]$ be the ring of $K$-lineear differential operators on $\cO_n$. Let $M$ be a holonomic $\cD_n$-module. Then $H^i(\bP, M) = 0$ for $i < n - \dim M$.
\end{thm}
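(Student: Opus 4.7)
The plan is to imitate the elementary argument used for Theorem~3 (the Weyl-algebra case), replacing the Bernstein filtration by the order filtration on $\cD_n$. First, equip $\cD_n$ with $F_p\cD_n = \{P\in\cD_n : \operatorname{ord}(P)\le p\}$, so that $\operatorname{gr}^F\cD_n \cong \cO_n[\xi_1,\ldots,\xi_n]$, a commutative noetherian regular ring in which each $\xi_i$ is the principal symbol of $\partial_i$. Since $M$ is holonomic, it admits a good filtration $\{F_pM\}$, and the graded module $N:=\operatorname{gr}^F M$ is finitely generated over $\cO_n[\xi]$ with $\dim N = n$ and with the image of $\Supp(N)$ in $\Spec(\cO_n)$ equal to $\Supp_{\cO_n}(M)$, a set of dimension $\dim M$.

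Next, filter the Koszul complex $K(\bP;M)$ by using $F_\bullet M$ with a standard shift in each homological degree; because each $\partial_i$ has order $1$, this filtration is compatible with the Koszul differential, and the associated graded complex is precisely the commutative Koszul complex $K(\xi_1,\ldots,\xi_n;N)$ on the principal symbols. Since the filtration is exhaustive and bounded below in each fixed homological degree, the resulting spectral sequence converges, and the vanishing $H^i(\xi;N)=0$ for $i<n-\dim M$ forces $H^i(\bP;M)=0$ in the same range.

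It thus suffices to prove the following purely commutative-algebra statement: if $N$ is a finitely generated graded $\cO_n[\xi_1,\ldots,\xi_n]$-module with $\dim N = n$ and whose support projects onto a $d$-dimensional subset of $\Spec(\cO_n)$, then $\grade((\xi_1,\ldots,\xi_n);N)\ge n-d$. The idea is to produce an $N$-regular sequence of length $n-d$ inside the ideal $(\xi_1,\ldots,\xi_n)$ by making a generic $K$-linear change of the $\xi_i$'s: since the fibers of $\Supp(N)\to\Supp_{\cO_n}(M)$ have codimension at least $n-d$ in the $\xi$-affine space, generic linear combinations of the $\xi_i$ avoid every associated prime of $N$ and yield the regular sequence inductively.

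The main obstacle is this final commutative-algebra step. In the Weyl-algebra proof of Theorem~3 one can invoke the Fourier involution $X_i \leftrightarrow \partial_i$ to convert de~Rahm cohomology into a transparent question about ordinary support; this symmetry is unavailable over $\cO_n$, and so the grade estimate must be obtained directly. The most delicate point is to extract a genuine regular sequence even when $N$ has many associated primes lying over distinct points of $\Supp_{\cO_n}(M)$. A clean way to handle this is by induction on $\dim M$, using a short exact sequence $0 \to \Gamma_f(M) \to M \to M_f \to 0$ for a well-chosen $f\in\cO_n$ whose vanishing locus slices the support by one dimension; both ends remain holonomic of strictly smaller support dimension, and a long-exact-sequence chase together with the induction hypothesis completes the argument.
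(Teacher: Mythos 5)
Your route (order filtration, passage to $N=\operatorname{gr}^F M$ over $\cO_n[\xi_1,\ldots,\xi_n]$, filtered Koszul complex and its spectral sequence) is entirely different from the paper's, which instead refines the elementary composition-series argument of Theorem 3: the criterion that for a simple module with $\Ass=\{P\}$ the non-vanishing of $H_1(\partial_n;-)$, or the vanishing of $H_0(\partial_n;-)$, forces $P=(P\cap\cO_{n-1})\cO_n$; a Krull-intersection version of the $\partial_n$-stable ideal lemma; and van den Essen's results guaranteeing that after a suitable change of variables $H_i(\partial_n;M)$ remain holonomic. The problem with your reduction is that it lands on a claim you do not prove and which is not true in the stated generality. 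The spectral sequence yields $H^i(\bP;M)=0$ for $i<n-d$ only if $H^i(\xi_1,\ldots,\xi_n;N)=0$ in that range, i.e.\ only if $\grade\left((\xi);N\right)\ge n-d$. By depth sensitivity of the Koszul complex, $\grade\left((\xi);N\right)=\inf\{\depth N_P : P\in V(\xi)\cap\Supp N\}$, so what you actually need is a depth bound for $\operatorname{gr}^F M$ at every point of the zero section lying over $\Supp M$. Holonomicity gives purity of dimension for $\Ass(N)$, but it does not make $N$ Cohen--Macaulay, and for an arbitrary good filtration $N$ can acquire embedded components sitting over points of $\Supp M$ (the characteristic cycle is filtration-independent; the embedded primes are not). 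A single embedded prime $P\supseteq(\xi)$ gives $\depth N_P=0$ and destroys the estimate. Correspondingly, your prime-avoidance construction of the regular sequence only works at the first step: a generic linear form in the $\xi_i$ avoids the pure $n$-dimensional associated primes of $N$, but after dividing by it the quotient can have associated primes containing all of $(\xi_1,\ldots,\xi_n)$, which no $K$-linear combination of the $\xi_i$ can avoid.

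The fallback you sketch does not repair this. In the sequence $0\to\Gamma_f(M)\to M\to M_f\to 0$ the module $M_f$ is supported on the closure of $\Supp M\setminus V(f)$, which for a generic choice of $f$ has the \emph{same} dimension as $\Supp M$; so the proposed induction on $\dim M$ does not get off the ground. To salvage your approach you would have to exhibit a good filtration for which $\operatorname{gr}^F M$ satisfies the required depth bound along the zero section --- which is essentially as deep as the duality/derived-category proof the theorem is meant to avoid --- or else abandon the symbol calculus, as the paper does.
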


Let $M$ be a holonomic $A_n(K)$-module. By a result of Lyubeznik the set of associate primes of $M$ as a $R$-module is finite. Note that the set $\Ass_R(M)$ has a natural
partial order given by inclusion.
  We say $P$ is a \textit{maximal }isolated associate prime of $M$
 if  $P$ is a maximal ideal of $R$ and also a minimal prime of $M$. We set $\mIso_R(M)$ to be the set of all maximal isolated associate primes of $M$. We show

\begin{thm}
Let $M$ be a holonomic $A_n(K)$-module. Then
\[
\dim_K H^n(\bP;M) \geq  \sharp \mIso_R(M).
\]
\end{thm}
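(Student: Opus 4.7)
The plan is to decompose $M$ as an $A_n(K)$-module separating the maximal isolated associated primes, and then to bound $H^n(\bP;-)$ on each summand using Theorems 1 and 3. Write $\mIso_R(M) = \{\m_1, \ldots, \m_r\}$ and let $M_i := \Gamma_{\m_i}(M) = \bigcup_{k \geq 1}(0:_M \m_i^k)$ be the $\m_i$-torsion submodule. The Leibniz rule yields $\m_i^k m = 0 \Rightarrow \m_i^{k+1}\partial_j m = 0$, so each $M_i$ is an $A_n(K)$-submodule of $M$. It is nonzero since $\m_i \in \Ass_R M$, and since $\m_i$ is simultaneously maximal in $R$ and minimal in $\Ass_R M$, it is an isolated closed point of $\Supp_R M$; hence $M_i$ is holonomic with $\dim M_i = 0$.

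The crux is to produce an $A_n(K)$-module direct sum decomposition
\[
M = M_1 \oplus \cdots \oplus M_r \oplus M''
\]
with $\Supp_R M'' \cap \{\m_1, \ldots, \m_r\} = \emptyset$. For any $m \in M$, the finitely generated $R$-submodule $Rm$ has closed support inside $\Supp_R M$, and each $\m_i$ occurring in $\Supp_R(Rm)$ is an isolated point thereof. Chinese Remainder applied to suitable powers of $\m_1, \ldots, \m_r$ together with an ideal defining the rest of $\Supp_R(Rm)$ yields an $R$-linear decomposition $m = m_1 + \cdots + m_r + m''$ with $m_i \in M_i$ and $m''$ having no $\m_i$ in its support; this gives canonical $R$-linear projections $\pi_i : M \rt M_i$. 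These are in fact $A_n(K)$-linear because $\partial_j$ shrinks the $R$-support of cyclic submodules: if $fm = 0$ then $f \partial_j m = -(\partial_j f) m$ by the Leibniz rule, and multiplying again by $f$ gives $f^2 \partial_j m = 0$, so $\Supp_R(R \partial_j m) \sub \Supp_R(Rm)$. Uniqueness of the CRT decomposition then forces $\pi_i(\partial_j m) = \partial_j \pi_i(m)$.

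Applying $H^n(\bP;-)$ splits off $\bigoplus_i H^n(\bP;M_i)$ as a direct summand of $H^n(\bP;M)$, so it is enough to show each $H^n(\bP;M_i) \neq 0$. Being holonomic of dimension zero, $M_i$ has finite length as an $A_n(K)$-module and therefore contains a simple $A_n(K)$-submodule $N_i$; by Kashiwara's equivalence the only simple holonomic $A_n(K)$-module supported at $\m_i$ is $H^n_{\m_i}(R)$, so $N_i \cong H^n_{\m_i}(R)$. The short exact sequence $0 \rt N_i \rt M_i \rt M_i/N_i \rt 0$, together with $H^{n-1}(\bP;M_i/N_i) = 0$ supplied by Theorem 3 (since $\dim M_i/N_i = 0$), gives an injection $H^n(\bP;N_i) \hookrightarrow H^n(\bP;M_i)$. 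Theorem 1 applied to $I = \m_i$ yields $\dim_K H^n(\bP;H^n_{\m_i}(R)) = \sharp V(\m_i)_L = [R/\m_i : K] \geq 1$, whence $H^n(\bP;M_i) \neq 0$ and $\dim_K H^n(\bP;M) \geq r = \sharp \mIso_R(M)$.

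I expect the main obstacle to be the second step, namely upgrading the $R$-linear CRT decomposition to one of $A_n(K)$-modules. This hinges on the elementary but crucial observation that the derivations $\partial_j$ preserve the $R$-supports of cyclic submodules, via the identity $fm = 0 \Rightarrow f^2 \partial_j m = 0$. The other ingredients (finite length of $M_i$, existence of a simple submodule, its isomorphism type via Kashiwara) are then standard.
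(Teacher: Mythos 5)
Your argument is correct in substance but takes a genuinely different route from the paper's. The paper never decomposes $M$ as a direct sum: it sets $I = P_1\cdots P_s$, the product of the non-maximal minimal primes of $M$, passes to the quotient $N = M/\Gamma_I(M)$, identifies $N$ explicitly as $\bigoplus_i \Gamma_{\m_i}(N) \cong \bigoplus_i E_R(R/\m_i)^{s_i}$ after first reducing to algebraically closed $K$, and then uses right-exactness of $H_0(\bP;-) = H^n(\bP;-)$ (together with $H_1(\bP;N)=0$) to pull the lower bound back to $M$; general $K$ is then handled by a separate base-change argument showing that $\sharp\mIso$ does not drop on passing to $\ov{K}$. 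You instead split off each $\Gamma_{\m_i}(M)$ as an $A_n(K)$-direct summand and bound each summand from below. Three remarks on your version. First, the decomposition is obtained more cleanly without the projections $\pi_i$: set $M'' = \{ m \in M \mid \m_i \notin \Supp_R(Rm) \ \text{for all} \ i\}$; your identity $fm = 0 \Rightarrow f^2\partial_j m = 0$ shows $M''$ is an $A_n(K)$-submodule, the Leibniz rule shows each $M_i$ is, their sum is direct by comaximality and disjointness of supports, and it exhausts $M$ by the CRT decomposition of each finitely generated $Rm$ — so the well-definedness and compatibility checks for the $\pi_i$ become unnecessary. Second, the appeal to Kashiwara's equivalence to identify the simple submodule of $M_i$ with $H^n_{\m_i}(R)$ is precisely the point that the paper's reduction to algebraically closed $K$ is designed to sidestep: for a non-rational maximal ideal this is Kashiwara for the closed immersion $\Spec(R/\m_i) \hookrightarrow \Spec(R)$ and needs a citation or a proof (the paper proves the rational case by hand inside the proof of Theorem 2, via the socle and the isomorphism $A_n(K)/A_n(K)\m \cong H^n_{\m}(R)$). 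Third, your use of Theorem 3 is a forward reference (it is proved in Section 7, after Theorem 5), but there is no circularity since its proof does not use Theorem 5. What your route buys: it avoids the base-change argument entirely, works uniformly over any $K$ of characteristic zero, and in fact yields the sharper bound $\dim_K H^n(\bP;M) \geq \sum_{i}[R/\m_i : K]$.
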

We give an application of Theorem 5. Let $I$ be an unmixed ideal of height $\leq n-2$. By Grothendieck vanishing theorem and the Hartshorne-Lichtenbaum vanishing theorem it follows that $H^{n-1}_I(R)$ is supported only at maximal ideals of $R$. By Theorem 5 we get
\[
\sharp \Ass_R H^{n-1}_I(R) \leq \dim_K H^n\left(\partial;H^{n-1}_I(R)\right).
\]

We now describe in brief the contents of the paper. In section 1 we discuss a few preliminary results that we need.
In section 2 we make a few computations. This is used in section 3 to prove Theorem 1. In section 4 we make some additional computations and use it in section  5 to prove Theorem 2. In section 6 we prove Theorem 5. In section 7 we prove Theorem 3. In section 8 we prove Theorem 4.
 \section{Preliminaries}
 In  this section  we discuss a few preliminary results that we need.
\begin{remark}
Altough all the results are stated for De-Rahm cohomology of a $A_n(K)$-module $M$, we will actually work with
De-Rahm homology. Note that $H_i(\bP, M) = H^{n-i}(\bP, M)$ for any $A_n(K)$-module. Let $S = K[\partial_1,\ldots,\partial_n]$. Consider it as a subring of $A_n(K)$. Then note that $H_i(\bP, M)$ is the $i^{th}$ Koszul homology module of $M$ with respect to $\bP$.
\end{remark} 
\s \label{mod-1}
Let $M$ be a holonomic $A_n(K)$-module. Then for $i = 0,1$ the De-Rahm  homology modules $H_i(\partial_n,M)$ are holonomic $A_{n-1}(K)$-modules, see \cite[1.6.2]{B}. 
 
The following result is well-known.
  \begin{lemma}\label{Koszul}
  Let $\bP = \partial_r, \partial_{r+1},\ldots,\partial_{n}$ and $\bP^\prime = \partial_{r+1},\ldots,\partial_{n}$. Let $M$ be a left $A_{n}(K)$-module. For each $i \geq 0$ there exist an exact sequence
  \[
  0 \rt H_0( \partial_r ; H_i(\bP^\prime;M)) \rt H_i(\bP; M) \rt H_1(\partial_r ; H_{i-1}(\bP^\prime ; M)) \rt 0.
  \]
  \end{lemma}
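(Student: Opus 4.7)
The plan is to realize $K(\bP;M)$ as a mapping cone and read off the short exact sequence from the resulting long exact sequence in homology. Writing $S = K[\partial_1,\ldots,\partial_n]$, the Koszul complex $K(\bP;M)$ is naturally isomorphic to the total complex of the double complex
\[
K(\partial_r;K(\bP^\prime;M)),
\]
since taking the Koszul complex on a sequence is the same as tensoring (over $K$) the one-variable Koszul complexes, and this tensor product is associative. Equivalently, $K(\bP;M)$ is the mapping cone of the chain map $\partial_r\colon K(\bP^\prime;M)\to K(\bP^\prime;M)$ given by multiplication by $\partial_r$ on each module in the complex (this is a chain map because $\partial_r$ commutes with $\partial_{r+1},\ldots,\partial_n$, which is where the fact that $M$ is a left $A_n(K)$-module is used).

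From this description, one obtains a short exact sequence of chain complexes
\[
0\rt K(\bP^\prime;M)\rt K(\bP;M)\rt K(\bP^\prime;M)[-1]\rt 0,
\]
which in turn yields a long exact sequence in homology
\[
\cdots\rt H_i(\bP^\prime;M)\xrightarrow{\ \partial_r\ } H_i(\bP^\prime;M)\rt H_i(\bP;M)\rt H_{i-1}(\bP^\prime;M)\xrightarrow{\ \partial_r\ } H_{i-1}(\bP^\prime;M)\rt\cdots
\]
The only point requiring a small verification is that the connecting homomorphism is precisely multiplication by $\partial_r$; this follows by tracing the standard zig-zag through the mapping cone and using that $\partial_r$ acts on the homology of $K(\bP^\prime;M)$ because it commutes with each $\partial_j$ for $j>r$.

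Now one breaks this long exact sequence into short exact pieces by taking the cokernel of one $\partial_r$-map and the kernel of the next:
\[
0\rt \coker\bigl(\partial_r\colon H_i(\bP^\prime;M)\to H_i(\bP^\prime;M)\bigr)\rt H_i(\bP;M)\rt \ker\bigl(\partial_r\colon H_{i-1}(\bP^\prime;M)\to H_{i-1}(\bP^\prime;M)\bigr)\rt 0.
\]
Finally, for any $K[\partial_r]$-module $N$ the single-element Koszul homology is $H_0(\partial_r;N)=N/\partial_r N$ and $H_1(\partial_r;N)=\ker(\partial_r\colon N\to N)$, so substituting $N=H_i(\bP^\prime;M)$ and $N=H_{i-1}(\bP^\prime;M)$ yields exactly the claimed sequence. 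The only genuine obstacle is the identification of the connecting map with $\partial_r$; everything else is formal manipulation of double complexes and Koszul homology.
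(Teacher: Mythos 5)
Your proof is correct, and it is the standard mapping-cone argument that the paper implicitly invokes when it states this lemma as ``well-known'' without proof: realize $K(\bP;M)$ as the cone of $\partial_r$ acting on $K(\bP^\prime;M)$, identify the connecting maps of the resulting long exact sequence with (up to sign) multiplication by $\partial_r$, and splice. The one point you flag as needing verification --- that the connecting homomorphism is $\pm\partial_r$ --- is indeed the only non-formal step, and your justification (commutativity of the $\partial_j$, so $\partial_r$ is a chain self-map and the zig-zag reduces to it) is adequate; the sign ambiguity is harmless since only the kernel and cokernel of that map enter the final sequence.
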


\s \label{change-variables} (linear change of variables). We consider a linear change of variables. Let $U_1,\ldots,U_n$ be new variables defined by
\[
U_i = d_{i1}X_1 + \cdots + d_{in}X_n + c_i \quad \text{for} \ i = 1,\ldots,n
\]
where  $d_{ij}, c_1,\ldots,c_n \in K$ are arbitrary and $D = [d_{ij}]$ is an invertible matrix.  We say that the change of variables is homogeneous if $c_i = 0$ for all $i$.

 Let $F = [f_{ij}] = (D^{-1})^{tr}$.  Using the chain rule it can be easily shown that
\[
\frac{\partial}{\partial U_i} = f_{i1}\frac{\partial}{\partial X_1} + \cdots + f_{in}\frac{\partial}{\partial X_n} \quad \text{for} \ i = 1, \ldots, n.
\]
In particular we have that for any $A_n(K)$ module $M$ an isomorphism of Koszul homologies
\[
H_i\left(\frac{\partial}{\partial U_1}, \cdots,\frac{\partial}{\partial U_n} ; M \right) \cong H_i\left(\frac{\partial}{\partial X_1}, \cdots,\frac{\partial}{\partial X_n} ; M \right)
\]
for all $i \geq 0$.

\s \label{Mayer}  Let $I,J$ be two ideals in $R$ with $J \supset I$ and let $M$ be a $R$-module. The inclusion $\Gamma_J(-) \subset \Gamma_I(-)$ induces, for each $i$, an $R$-module homomorphism
\[
\theta^i_{J,I}(M) \colon H^i_J(M) \rt H^i_I(M).
\]
If $L \supset J$ then we can easily see that
\begin{equation*}
\theta^i_{J,I}(M) \circ \theta^i_{L,J}(M)   = \theta^i_{L,I}(M). \tag{$\dagger$}
\end{equation*}
\begin{lemma}\label{theta}(with hypotheses as above)
If $M$ is a $A_n(K)$-module then
   the natural map $\theta^i_{J,I}(M)$ is $A_n(K)$-linear.
\end{lemma}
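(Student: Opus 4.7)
The plan is to realize $\theta^i_{J,I}(M)$ as the cohomology of an explicit $A_n(K)$-linear chain map between \v{C}ech complexes, so that $A_n(K)$-linearity can be read off at the cochain level. The background fact I will use throughout is standard: for any $s\in R$, the localization $M_s$ of an $A_n(K)$-module $M$ is again an $A_n(K)$-module (via the quotient rule $\partial_j(m/s^t) = \partial_j(m)/s^t - t\,\partial_j(s)\,m/s^{t+1}$), and each canonical localization map $M_s\to M_{st}$ is $A_n(K)$-linear. Consequently the \v{C}ech complex on any finite list of elements of $R$ is a complex of $A_n(K)$-modules with $A_n(K)$-linear differentials, and the $A_n(K)$-module structure on $H^*_I(M)$ is the one induced on its cohomology.

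First I fix generating sets $I=(f_1,\dots,f_k)$ and $J=(g_1,\dots,g_\ell)$. Because $J\supseteq I$, the pooled list $(f_1,\dots,f_k,g_1,\dots,g_\ell)$ also generates $J$, so the \v{C}ech complex $\check{C}^\bullet(f,g;M)$ built from it computes $H^*_J(M)$, while $\check{C}^\bullet(f;M)$ computes $H^*_I(M)$.

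Next I define a ``projection onto $f$-only summands'' map
\[
\pi\colon \check{C}^\bullet(f,g;M)\longrightarrow \check{C}^\bullet(f;M)
\]
acting as the identity on each summand $M_{f_T}$ indexed by $T\subseteq\{1,\dots,k\}$ and as zero on every summand $M_{h_S}$ whose index set $S$ contains at least one of the $g_j$. Both kinds of maps are $A_n(K)$-linear, so $\pi$ is $A_n(K)$-linear on the nose. Checking that $\pi$ commutes with the differential is a direct bookkeeping calculation: on an $f$-only summand, the components of $d_{f,g}$ landing in $f$-only subsets reproduce the differential of $\check{C}^\bullet(f;M)$ (same localization maps, same signs, provided the $f_i$ precede the $g_j$ in the chosen linear ordering); on a summand whose index $S$ already contains some $g_j$, every component of $d_{f,g}$ still contains $g_j$, so both $\pi\circ d$ and $d\circ\pi$ vanish there.

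Finally I identify $H^i(\pi)$ with $\theta^i_{J,I}(M)$. In degree zero both maps restrict to the inclusion $\Gamma_J(M)\hookrightarrow \Gamma_I(M)$, and each is part of a morphism of cohomological $\delta$-functors $\{H^i_J(-)\}\to\{H^i_I(-)\}$ on $R$-modules; universality of right-derived functors forces $H^i(\pi)=\theta^i_{J,I}(M)$ in every degree. Being the cohomology of an $A_n(K)$-linear map, $\theta^i_{J,I}(M)$ is $A_n(K)$-linear. The only place demanding care is verifying that $\pi$ is genuinely a chain map (not merely up to homotopy); this is the bookkeeping subtlety of the argument, but it is elementary once the obvious ordering of the combined list is adopted.
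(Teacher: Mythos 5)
Your proof is correct and follows essentially the same route as the paper: both realize $\theta^i_{J,I}(M)$ as the map induced on cohomology by the canonical, visibly $A_n(K)$-linear surjection from the \v{C}ech complex on a generating set of $J$ containing one of $I$ onto the \v{C}ech complex on the generators of $I$ (the paper adds the extra generators one at a time via the transitivity relation $(\dagger)$ and the standard short exact sequence of \v{C}ech complexes, whereas you add them all at once and check the chain-map property directly). Your $\delta$-functor argument identifying $H^i(\pi)$ with $\theta^i_{J,I}(M)$ is a slightly more formal version of the paper's final identification, and it is sound.
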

\begin{proof}
    Let $I = (a_1,\ldots,a_s)$. Using ($\dagger$) we may assume that
  $J = I + (b)$.
   Let $C(\ba;M)$ be the \v{C}ech-complex on $M$ \wrt \ $\ba$. Let $C(\ba,b;M)$ be the \v{C}ech-complex on $M$ \wrt \ $\ba,b$.
   Note that we have a natural short exact sequence of complexes of $R$-modules
   \[
  0 \rt C(\ba;M)_b[-1] \rt C(\ba,b;M) \rt  C(\ba;M) \rt 0.
   \]
Since $M$ is a $A_n(K)$-module it is easily seen that the above map is a map of complexes of $A_n(K)$-modules. It follows that the map $H^i(C(\ba,b;M)) \rt  H^i(C(\ba;M))$ is $A_n(K)$ linear. It is easy to see that this map is $\theta^i_{J,I}(M)$.
\end{proof}
\s\label{Mayer-2} Let $\aF,\bF$ be ideals in $R$ and let $M$ be an $A_n(K)$-module. Consider the Mayer-Vietoris sequence
is a sequence of $R$-modules
\[
\rt H^i_{\aF + \bF}(M) \xrightarrow{\rho^i_{\aF,\bF}(M)} H^i_{\aF}(M) \oplus H^i_{\bF}(M)\xrightarrow{\pi^i_{\aF,\bF}(M)} H^i_{\aF \cap \bF}(M) \xrightarrow{\delta^i}H^{i+1}_{\aF + \bF}(M) \rightarrow..
\]
Then for all $i \geq 0$ the maps $\rho^i_{\aF,\bF}(M)$ and  $\pi^i_{\aF,\bF}(M)$ are $A_n(K)$-linear.

To see this first note that since $M$ is a $A_n(K)$-module all the above local cohomology modules are $A_n(K)$-modules.
Further note that, (see \cite[15.1]{a7}),
\begin{align*}
\rho^i_{\aF,\bF}(M)(z) &= \left( \theta^i_{\aF+\bF,\aF}(z) , \theta^i_{\aF+\bF,\bF}(z) \right),\\
\pi^i_{\aF,\bF}(M)(x,y) &= \theta^i_{\aF,\aF \cap  \bF}(x) - \theta^i_{\bF,\aF \cap  \bF}(y).
\end{align*}
Using Lemma \ref{theta} it follows that $\rho^i_{\aF,\bF}(M)$ and   $\pi^i_{\aF,\bF}(M)$ are $A_n(K)$-linear maps.

\begin{remark}
Infact $\delta^i$ is also $A_n(K)$-linear for all $i \geq 0$; \cite{P}. However we will not use this fact
in this paper.
\end{remark}

\s\label{co-prime} Let $I_1,\ldots,I_n$ be proper ideals in $R$. Assume that they are pairwise co-maximal i.e., $I_i + I_j = R$ for $i \neq j$. Set $J = I_1\cdot I_2 \cdots I_n$. Then
for any $R$-module $M$ we have an isomorphism of $A_n(K)$-modules
\[
H^i_{J}(M) \cong \bigoplus_{j = 1}^{n}H^i_{I_j}(M) \quad \text{for all} \ i \geq 0.
\]
To prove this result note that $I_1$ and $I_2\cdots I_n$ are co-maximal. So it suffices to prove the result for $n = 2$. In this case we  use the Mayer-Vieotoris sequence of local cohomology, see \ref{Mayer-2}, to get
an isomorphism of $R$-modules
\[
\pi^i_{I_1,I_2}(R)\colon H^i_{I_1}(R) \oplus H^i_{I_2}(R)\rt H^i_{I_1 \cap I_2}(R).
\]
By \ref{Mayer-2} we also get that $\pi^i_{I_1,I_2}(R)$ is $A_n(K)$-linear.

\section{Some computations}
The goal of this section is to compute the Koszul homologies  $H_*(\partial_1,\ldots,\partial_n ; N)$
when $N= R$ and when $N = E$ the injective hull of $R/(X_1,\ldots,X_n) = K$.   It is well-known that
\[
E = \bigoplus_{r_1,\ldots,r_n \geq 0} K \frac{1}{X_1X_2\cdots X_n X_1^{r_1}X_2^{r_2}\cdots X_n^{r_n}}.
\]
Note that $E$ has the obvious structure as a $A_n(K)$-module with
\[
X_i \cdot\frac{1}{X_1\cdots X_n X_1^{r_1}\cdots X_n^{r_n}} = \begin{cases} \frac{1}{X_1\cdots X_n X_1^{r_1}\cdots X_i^{r_i-1}\cdots X_n^{r_n}}& \text{ if} \  r_i \geq 1,\\ 0& \text{otherwise.}       \end{cases}
\]
and
\[
\partial_i \cdot \frac{1}{X_1\cdots X_n X_1^{r_1}\cdots X_n^{r_n}} = \frac{-r_i -1}{X_1\cdots X_n X_1^{r_1}\cdots X_i^{r_i +1}\cdots X_n^{r_n}}
\]
It is convenient to introduce the following notation. For $i = 1, \cdots, n$ let $R_i = K[X_1,\ldots,X_i]$, $\m_i = (X_1,\ldots,X_i)$ and let $E_i $ be the injective hull of $R_i/\m_i = K$ as a $R_i$-module. Set $R_0 = E_0 = K$.
We prove
\begin{lemma}\label{E-basic-Lem}
$H_0(\partial_n; E_n) \cong E_{n-1}$ and $H_1(\partial_n;E_n) = 0$ as $A_{n-1}(K)$-modules.
\end{lemma}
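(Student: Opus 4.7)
The plan is to work directly with the explicit $K$-basis of $E_n$ given in the excerpt and compute the action of $\partial_n$ on it.

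First I would compute $H_1(\partial_n; E_n) = \ker\bigl(\partial_n \colon E_n \to E_n\bigr)$. Take a general element $z = \sum c_{r_1,\ldots,r_n} \cdot 1/(X_1 \cdots X_n X_1^{r_1}\cdots X_n^{r_n})$ (a finite sum). By the formula given just before the lemma,
\[
\partial_n z = \sum c_{r_1,\ldots,r_n}\, \frac{-(r_n+1)}{X_1 \cdots X_n X_1^{r_1}\cdots X_{n-1}^{r_{n-1}} X_n^{r_n+1}}.
\]
The elements appearing on the right are again distinct members of the standard $K$-basis of $E_n$, and since $K$ has characteristic zero the scalar $-(r_n+1)$ is never zero. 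Hence $\partial_n z = 0$ forces every $c_{r_1,\ldots,r_n}$ to vanish, so $\partial_n$ is $K$-linearly injective on $E_n$ and $H_1(\partial_n; E_n) = 0$.

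Next I would compute $H_0(\partial_n; E_n) = E_n/\partial_n E_n$. The formula for $\partial_n$ shows that its image is exactly the $K$-span of those basis elements of $E_n$ for which the exponent of $X_n$ is at least $2$, i.e.\ those with $r_n \geq 1$. Thus a $K$-basis of the quotient $E_n/\partial_n E_n$ is given by the classes of $1/(X_1 \cdots X_n X_1^{r_1}\cdots X_{n-1}^{r_{n-1}})$ for $r_1,\ldots,r_{n-1}\geq 0$. I would define a $K$-linear map
\[
\varphi \colon E_n/\partial_n E_n \longrightarrow E_{n-1}, \qquad
\overline{\frac{1}{X_1 \cdots X_n X_1^{r_1}\cdots X_{n-1}^{r_{n-1}}}} \longmapsto \frac{1}{X_1 \cdots X_{n-1} X_1^{r_1}\cdots X_{n-1}^{r_{n-1}}},
\]
which is evidently a $K$-linear bijection between the two bases.

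Finally I would verify that $\varphi$ is $A_{n-1}(K)$-linear. Since $\partial_n$ commutes with $X_1,\ldots,X_{n-1}, \partial_1,\ldots,\partial_{n-1}$ in $A_n(K)$, the $A_{n-1}(K)$-action descends to the quotient $E_n/\partial_n E_n$. A direct inspection of the formulas for the actions of $X_i$ and $\partial_i$ ($i<n$) recorded in the paragraph preceding the lemma shows that they act on the chosen basis of $E_n/\partial_n E_n$ by exactly the same rules as on the corresponding basis of $E_{n-1}$, so $\varphi$ intertwines the two actions. The only point requiring care is the identification of the $X_n$-factor in the denominator, which is essentially a bookkeeping issue and poses no real obstacle; this is the main place where one must simply be careful rather than ingenious.
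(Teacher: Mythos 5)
Your proposal is correct and follows essentially the same route as the paper: an explicit coefficient comparison using the stated formula for $\partial_n$ to get injectivity (hence $H_1=0$), and the identification of $E_n/\partial_n E_n$ with the span of basis elements having $r_n=0$, which is then matched with $E_{n-1}$ as an $A_{n-1}(K)$-module. The paper phrases the second step as a direct sum decomposition $E_n = X \oplus Y$ with $\partial_n E_n = Y$ and $X \cong E_{n-1}$, which is the same computation as your map $\varphi$.
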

\begin{proof}
Since $E_n$  is holonomic $A_n(K)$ module it follows that $H_i(\partial_n ; E_n)$ (for $i = 0,1$) are  holonomic $A_{n-1}(K)$-modules \cite[Chapter 1, Theorem 6.2]{B}. We first prove $H_1(\partial_n;E_n) = 0$. Let $t \in E_n$ with $\partial_n(t) =0$.
Let
$$ t = \sum_{ r_1,\ldots,r_n \geq 0} t_r \frac{1}{X_1\cdots X_n X_1^{r_1}\cdots X_n^{r_n}} \quad \text{with atmost finitely many $t_r$ non-zero.}  $$
Notice that
\[
\partial_n(t) = \sum_{ r_1,\ldots,r_n \geq 0} t_r \frac{-r_n -1}{X_1\cdots X_{n-1} X_n X_1^{r_1}\cdots X_{n-1}^{r_{n-1}}X_n^{r_n +1}}.
\]
Comparing coefficients we get that if $\partial_n(t) = 0$ then $t = 0$.

For computing $H_0(\partial_n; E_n)$ we first note that as $K$-vector spaces
\[
E_n = X \bigoplus Y;
\]
where
\begin{align*}
X &= \bigoplus_{r_1 ,\ldots,r_{n-1} \geq 0,r_n = 0} K \frac{1}{X_1X_2\cdots X_n X_1^{r_1} X_2^{r_2}\cdots X_{n-1}^{r_{n-1}}} \\
Y &= \bigoplus_{r_1,\ldots,r_{n-1} \geq 0,r_n \geq 1} K \frac{1}{X_1X_2\cdots X_n X_1^{r_1}X_2^{r_2}\cdots X_n^{r_n}}.
\end{align*}
For $r_n \geq 1$ note that
\[
\partial_n\left(\frac{1}{X_1X_2\cdots X_n X_1^{r_1}X_2^{r_2}\cdots X_n^{r_{n}-1}} \right) =  \frac{-r_n}{X_1X_2\cdots X_n X_1^{r_1}X_2^{r_2}\cdots X_n^{r_n}}.
\]
It follows that $E_n/\partial_n E_n = X$. Furthermore notice that $X \cong E_{n-1}$ as $A_{n-1}(K)$-modules. Thus we get
$H_0(\partial_n ; E_n) \cong E_{n-1}$.
\end{proof}
We now show that
\begin{lemma}\label{E-Lem}
For $c = 1,2,\ldots,n$ we have,
\[
H_i(\partial_c,\partial_{c+1},\cdots,\partial_n ; E_n) = \begin{cases} 0 & \text{for} \ i > 0 \\ E_{c-1}& \text{for} \ i = 0
\end{cases}
\]
\end{lemma}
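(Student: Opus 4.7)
The plan is to use downward induction on $c$, with the base case $c = n$ being exactly Lemma \ref{E-basic-Lem}. The inductive step will combine the inductive hypothesis with Lemma \ref{Koszul} to telescope the Koszul homology of $\partial_c,\ldots,\partial_n$ down to the Koszul homology of the single operator $\partial_c$ acting on $E_c$, at which point Lemma \ref{E-basic-Lem} (applied inside $R_c$ instead of $R_n$) finishes things off.

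More precisely, I assume the lemma for $c+1$: $H_i(\partial_{c+1},\ldots,\partial_n;E_n)=0$ for $i>0$ and $=E_c$ for $i=0$, as $A_c(K)$-modules. Applying Lemma \ref{Koszul} with $r=c$, $\bP = \partial_c,\partial_{c+1},\ldots,\partial_n$, $\bP' = \partial_{c+1},\ldots,\partial_n$, and $M = E_n$, I obtain for every $i \geq 0$ a short exact sequence
\[
0 \to H_0(\partial_c; H_i(\bP';E_n)) \to H_i(\bP;E_n) \to H_1(\partial_c; H_{i-1}(\bP';E_n)) \to 0.
\]
For $i \geq 2$, both flanking terms vanish by the induction hypothesis, so $H_i(\bP;E_n)=0$. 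For $i = 1$, the left term vanishes and the right term is $H_1(\partial_c;E_c)$, which is zero by Lemma \ref{E-basic-Lem} applied to $E_c$ over $A_c(K)$; hence $H_1(\bP;E_n)=0$. For $i = 0$, the right term is automatically zero and the left term is $H_0(\partial_c;E_c) \cong E_{c-1}$, again by Lemma \ref{E-basic-Lem} applied inside $R_c$. This yields the desired values.

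The one point requiring mild care is that Lemma \ref{E-basic-Lem} was stated for the top module $E_n$, but its proof used only the explicit $A_n(K)$-action on the injective hull of the residue field; so the same argument, reindexed, gives $H_0(\partial_c;E_c) \cong E_{c-1}$ and $H_1(\partial_c;E_c)=0$ as $A_{c-1}(K)$-modules. This is really the main (and only) obstacle: one must check that the identification $H_0(\bP';E_n) \cong E_c$ from the inductive hypothesis respects the $A_c(K)$-module structure needed to feed into the base case. But this is clear from the construction in Lemma \ref{E-basic-Lem}, where the summand $X \subset E_n$ realizing the quotient $E_n/\partial_n E_n$ was identified with $E_{n-1}$ as $A_{n-1}(K)$-modules by the natural inclusion of variables; iterating this identification gives precisely $E_c \subset E_n$ with the standard $A_c(K)$-action, closing the induction.
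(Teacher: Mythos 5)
Your proof is correct and follows essentially the same route as the paper: induction on $n-c$, the short exact sequence of Lemma \ref{Koszul}, and the base computation of Lemma \ref{E-basic-Lem} reindexed to $E_c$ over $A_c(K)$. The reindexing point you flag is handled the same way (implicitly) in the paper's own argument.
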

\begin{proof}
We prove the result by induction on $t = n -c$. For $t = 0$ it is just the Lemma \ref{E-basic-Lem}. Let $t \geq 1$ and assume the result for $t -1$.
Let $\bP = \partial_c, \partial_{c+1},\ldots,\partial_{n}$ and $\bP^\prime = \partial_{c+1},\ldots,\partial_{n}$.  For each $i \geq 0$ there exist an exact sequence
  \[
  0 \rt H_0( \partial_c ; H_i(\bP^\prime; E_n)) \rt H_i(\bP; E_n) \rt H_1(\partial_c ; H_{i-1}(\bP^\prime ; E_n)) \rt 0.
  \]
By induction hypothesis $H_i(\bP^\prime ; E_n) = 0 $ for $i \geq 1$. Thus for $i \geq 2$ we have $H_i(\bP ; E_n) = 0$. Also note that by induction hypothesis
$H_0(\bP^\prime ; E_n) = E_c$. So we have
\[
H_1(\bP ; E_n) = H_1(\partial_c ; E_c) = 0 \quad \text{by Lemma \ref{E-basic-Lem}}.
\]
Finally again by Lemma \ref{E-basic-Lem} we have
\[
H_0(\bP ; E_n) = H_0(\partial_c ; E_c) = E_{c-1}.
\]
\end{proof}
As a corollary to the above result we have
\begin{theorem}\label{E-Th}
Let $\partial = \partial_1,\ldots,\partial_n$. Then
$H_i(\partial ; E_n) = 0$ for $i > 0$ and $H_0(\partial; E_n) = K$.\qed
\end{theorem}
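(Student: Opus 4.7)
The plan is to obtain Theorem \ref{E-Th} as an immediate specialization of Lemma \ref{E-Lem}, with essentially no additional work needed. Specifically, I would simply take $c = 1$ in the statement of Lemma \ref{E-Lem}: this yields $H_i(\partial_1, \partial_2, \ldots, \partial_n; E_n) = 0$ for $i > 0$ and $H_0(\partial_1, \ldots, \partial_n; E_n) \cong E_0$. Invoking the convention $R_0 = E_0 = K$ fixed just before Lemma \ref{E-basic-Lem}, this yields the desired identification $H_0(\partial; E_n) = K$, and the theorem follows.

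There is no genuine obstacle at this step, because all of the real content has already been packaged into Lemma \ref{E-Lem}. The substantive work happened earlier: the base case Lemma \ref{E-basic-Lem} required the explicit coefficient analysis of $\partial_n$ acting on $E_n$ to show that $\partial_n$ is injective on $E_n$ (giving the vanishing of $H_1$) and that the quotient $E_n/\partial_n E_n$ is isomorphic to the copy of $E_{n-1}$ sitting inside $E_n$ as the span of monomials with no negative powers of $X_n$. The induction step in Lemma \ref{E-Lem} then fed this into the short exact sequence of Lemma \ref{Koszul}, reducing Koszul homology in $n-c+1$ variables to Koszul homology in a single $\partial_c$ applied to $H_*(\partial_{c+1},\ldots,\partial_n; E_n)$, for which the inductive hypothesis and Lemma \ref{E-basic-Lem} together produce the vanishing in positive homological degrees and the reduction $E_c \rightsquigarrow E_{c-1}$ in degree zero.

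Thus the write-up for Theorem \ref{E-Th} should be just one or two lines quoting Lemma \ref{E-Lem} at $c=1$ and citing the definition $E_0 = K$.
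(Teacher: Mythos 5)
Your proposal is correct and matches the paper exactly: the paper presents Theorem \ref{E-Th} as an immediate corollary of Lemma \ref{E-Lem} (the case $c=1$, together with the convention $E_0 = K$), which is precisely your argument.
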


We now compute the de Rahm homology $H_*(\partial; R)$. We first prove
\begin{lemma}\label{R-basic-Lem}
$H_0(\partial_n ; R_n) = 0$ and $H_1(\partial_n ; R_n) = R_{n-1}$
\end{lemma}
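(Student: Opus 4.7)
The plan is very short since the Koszul complex in question has only two terms. Writing out $K(\partial_n; R_n)$ we see that it is simply the two-term complex
\[
0 \longrightarrow R_n \xrightarrow{\ \partial_n\ } R_n \longrightarrow 0
\]
placed in homological degrees $1$ and $0$. Thus $H_1(\partial_n; R_n) = \ker(\partial_n\colon R_n \to R_n)$ and $H_0(\partial_n; R_n) = \coker(\partial_n\colon R_n \to R_n)$, and the task reduces to computing these two $K$-vector spaces (which in fact carry natural $A_{n-1}(K)$-module structures).

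For $H_1$, I would write an arbitrary $f \in R_n$ as $f = \sum_{i \geq 0} f_i X_n^i$ with $f_i \in R_{n-1}$ (finite sum) and observe that $\partial_n f = \sum_{i \geq 1} i f_i X_n^{i-1}$. Since $K$ has characteristic zero, $\partial_n f = 0$ forces $f_i = 0$ for all $i \geq 1$, so $f \in R_{n-1}$. Conversely every element of $R_{n-1}$ is killed by $\partial_n$. This identifies $H_1(\partial_n; R_n) = R_{n-1}$, and the identification is clearly compatible with the $A_{n-1}(K)$-action.

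For $H_0$, I would exhibit $\partial_n$ as surjective by formal antidifferentiation in the variable $X_n$: given $f = \sum_{i=0}^{d} f_i X_n^i$ with $f_i \in R_{n-1}$, the element
\[
F = \sum_{i=0}^{d} \frac{f_i}{i+1} X_n^{i+1} \in R_n
\]
satisfies $\partial_n F = f$. The existence of $F$ again uses characteristic zero, since we must invert $i+1$ for each $i \geq 0$. Hence $\coker(\partial_n) = 0$, giving $H_0(\partial_n; R_n) = 0$.

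There is no real obstacle in this argument; the only point to be careful about is the use of characteristic zero in the surjectivity step, which is a standing hypothesis of the paper. The compatibility of these isomorphisms with the $A_{n-1}(K)$-action is immediate because $\partial_1, \ldots, \partial_{n-1}$ and $X_1, \ldots, X_{n-1}$ all commute with $\partial_n$ and preserve the subring $R_{n-1} \subset R_n$.
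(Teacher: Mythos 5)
Your proposal is correct and is exactly the computation the paper has in mind: the paper's entire proof reads ``This is just calculus,'' and your kernel/cokernel calculation (with the characteristic-zero hypothesis used for both the vanishing of $i f_i$ and the antidifferentiation) is the intended argument written out in full.
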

\begin{proof}
This is just calculus.
\end{proof}
The proof of the following result is similar to the proof of \ref{E-Lem}.
\begin{lemma}\label{R-Lem}
For $c = 1,2,\cdots,n$ we have,
\[
H_i(\partial_c,\partial_{c+1},\cdots,\partial_n ; R_n) = \begin{cases} 0 & \text{for} \ i = 0,1,\cdots,n-c \\ R_{c-1}& \text{for} \ i = n-c+1
\end{cases}
\]\qed
\end{lemma}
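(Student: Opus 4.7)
The plan is to mirror the induction on $t = n - c$ used for Lemma~\ref{E-Lem}, simply substituting $R_n$ for $E_n$ throughout and using Lemma~\ref{R-basic-Lem} in place of Lemma~\ref{E-basic-Lem}. The base case $t = 0$ (i.e., $c = n$) is exactly the statement of Lemma~\ref{R-basic-Lem}: $H_0(\partial_n; R_n) = 0$ and $H_1(\partial_n; R_n) = R_{n-1}$.

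For the inductive step, assume the result for $t - 1$. Set $\bP = \partial_c, \partial_{c+1}, \ldots, \partial_n$ and $\bP^\prime = \partial_{c+1}, \ldots, \partial_n$. The inductive hypothesis, applied at index $c + 1$, gives
\[
H_i(\bP^\prime; R_n) = 0 \quad \text{for } 0 \leq i \leq n - c - 1, \qquad H_{n-c}(\bP^\prime; R_n) = R_c,
\]
and of course $H_i(\bP^\prime; R_n) = 0$ for $i > n - c$ since the Koszul complex on $\bP^\prime$ has length $n - c$. Feed this into the short exact sequence supplied by Lemma~\ref{Koszul},
\[
0 \rt H_0(\partial_c; H_i(\bP^\prime; R_n)) \rt H_i(\bP; R_n) \rt H_1(\partial_c; H_{i-1}(\bP^\prime; R_n)) \rt 0.
\]

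Three ranges for $i$ then cover everything. For $0 \leq i \leq n - c - 1$, both $H_i(\bP^\prime; R_n)$ and $H_{i-1}(\bP^\prime; R_n)$ vanish, so $H_i(\bP; R_n) = 0$. For $i = n - c$, the right-hand term vanishes and the left-hand term is $H_0(\partial_c; R_c)$, which is $0$ by Lemma~\ref{R-basic-Lem} applied at index $c$; hence $H_{n-c}(\bP; R_n) = 0$. For $i = n - c + 1$, the left-hand term vanishes (as $H_{n-c+1}(\bP^\prime; R_n) = 0$), and the right-hand term is $H_1(\partial_c; R_c) = R_{c-1}$, again by Lemma~\ref{R-basic-Lem} applied at index $c$; hence $H_{n-c+1}(\bP; R_n) \cong R_{c-1}$.

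There is no real obstacle here: the argument is purely formal bookkeeping on the short exact sequence, and the only substantive input is Lemma~\ref{R-basic-Lem} (essentially calculus: integration kills constants and yields a copy of $R_{n-1}$ in degree $1$). The one point requiring minor care is tracking that when the inductive hypothesis produces $R_c$, one must then invoke Lemma~\ref{R-basic-Lem} \emph{at the variable $\partial_c$ acting on $R_c$}, not on $R_n$; this shift in ambient ring is what propagates the pattern $R_{c-1}$ in degree $n - c + 1$.
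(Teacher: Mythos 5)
Your proof is correct and is exactly what the paper intends: the paper omits the argument, remarking only that it is ``similar to the proof of Lemma~\ref{E-Lem}'', and your write-up is precisely that analogous induction on $t = n-c$ via the short exact sequence of Lemma~\ref{Koszul}, with Lemma~\ref{R-basic-Lem} supplying the base case and the computation of $H_j(\partial_c; R_c)$.
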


As a corollary to the above result we have
\begin{theorem}\label{R-Th}
Let $\partial = \partial_1,\ldots,\partial_n$. Then
$H_i(\partial ; R_n) = 0$ for $i < n$ and $H_n(\partial; R_n) = K$.\qed
\end{theorem}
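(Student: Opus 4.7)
The plan is to obtain Theorem~\ref{R-Th} as the immediate specialization of Lemma~\ref{R-Lem} to $c=1$. The lemma already encodes the full dimension-by-dimension information: for each $c$, the truncated Koszul homology $H_i(\partial_c,\ldots,\partial_n;R_n)$ vanishes for $i \leq n-c$ and equals $R_{c-1}$ for $i = n-c+1$. Setting $c = 1$ turns the vanishing range into $i = 0, 1, \ldots, n-1$, i.e.\ $i < n$, and places the single nonzero homology in degree $n-c+1 = n$, with value $R_{c-1} = R_0$. By the convention $R_0 = K$ introduced just before Lemma~\ref{E-basic-Lem}, this gives exactly $H_n(\partial;R_n) = K$.

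There is essentially no obstacle at this stage: the substantive computation is already done inside Lemma~\ref{R-Lem}, whose proof (the paper notes) parallels Lemma~\ref{E-Lem}. That inductive argument reduces the $n$-variable Koszul homology against $\partial_c,\ldots,\partial_n$ to the one-variable base case Lemma~\ref{R-basic-Lem} (``just calculus'': $\partial_n$ is surjective on $R_n$ with kernel $R_{n-1}$) via the short exact sequence of Lemma~\ref{Koszul}, which splices $H_0(\partial_c;H_i(\bP';R_n))$ and $H_1(\partial_c;H_{i-1}(\bP';R_n))$ around $H_i(\bP;R_n)$. Thus the only task for Theorem~\ref{R-Th} is to write ``apply Lemma~\ref{R-Lem} with $c = 1$'' and observe $R_0 = K$.
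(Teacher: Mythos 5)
Your proposal is correct and is exactly the paper's route: the theorem is stated there as an immediate corollary of Lemma~\ref{R-Lem} (hence the \qed with no written proof), obtained by setting $c=1$ and using the convention $R_0 = K$. Nothing is missing.
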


We will need the following computation in part 2 of this paper.
\begin{lemma}\label{R-f}
Let $f$ be a non-constant squarefree polynomial in $R = K[X_1,\ldots,X_n]$. Let $\bP = \partial_1,\ldots,\partial_n$. Then
$H_n(\bP;R_f) = K$. Furthermore $H_n\left(\bP; H^1_{(f)}(R)\right) = 0$ and 
$$H_i\left(\bP; H^1_{(f)}(R)\right) \cong H_i(\bP;R_f) \quad \text{for} \ i < n.$$
\end{lemma}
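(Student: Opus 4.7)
The plan is to set up the standard short exact sequence attached to the element $f$ and then push everything through the long exact sequence of Koszul (de-Rahm) homology, using the computation of $H_*(\bP;R)$ already established in Theorem~\ref{R-Th}. Since $R$ is a domain and $f$ is non-constant, $f$ is a non-zero-divisor on $R$; localizing at $f$ and using the \v{C}ech complex on the single element $f$ gives the short exact sequence of $A_n(K)$-modules
\[
0 \rt R \rt R_f \rt H^1_{(f)}(R) \rt 0,
\]
with $H^i_{(f)}(R) = 0$ for $i \neq 1$. Note that the squarefree hypothesis is not needed for this step; it plays no role in the current lemma.

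The associated long exact sequence of Koszul homology with respect to $\bP$ reads
\[
\cdots \rt H_{i+1}\left(\bP;H^1_{(f)}(R)\right) \rt H_i(\bP;R) \rt H_i(\bP;R_f) \rt H_i\left(\bP;H^1_{(f)}(R)\right) \rt H_{i-1}(\bP;R) \rt \cdots
\]
By Theorem~\ref{R-Th}, $H_i(\bP;R) = 0$ for $i < n$ and $H_n(\bP;R) = K$. Therefore, for every $i \leq n-1$ both neighbouring terms vanish, giving the isomorphism $H_i(\bP;R_f) \cong H_i(\bP;H^1_{(f)}(R))$. At the top end, using also that $H_{n+1}(\bP;-)=0$ for any module (the Koszul complex has length $n$), I obtain the short exact sequence
\[
0 \rt K \rt H_n(\bP;R_f) \rt H_n\left(\bP;H^1_{(f)}(R)\right) \rt 0,
\]
in which the leftmost map is induced by the inclusion $R \hookrightarrow R_f$.

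It remains to compute $H_n(\bP;R_f)$ directly. By definition, this is the set of $g \in R_f$ with $\partial_i g = 0$ for all $i$. Viewing $R_f$ as a subring of the field of fractions $K(X_1,\ldots,X_n)$ and writing any such $g$ in lowest terms $h/k$ with $\gcd(h,k)=1$, the equations $(\partial_i h)k = h(\partial_i k)$ together with $\gcd(h,k) = 1$ force $k \mid \partial_i k$; since $\deg \partial_i k < \deg k$ and $\mathrm{char}\,K = 0$, this yields $\partial_i k = 0$ and then $\partial_i h = 0$ for all $i$, so $g \in K$. Thus $H_n(\bP;R_f) = K$. The inclusion $R \hookrightarrow R_f$ is the identity on constants, so the leftmost map in the short exact sequence above is an isomorphism, and consequently $H_n(\bP;H^1_{(f)}(R)) = 0$.

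The only genuinely substantive step is verifying $H_n(\bP;R_f) = K$; after that, everything is formal from the long exact sequence. The argument above using lowest-terms representatives is elementary, so no serious obstacle is expected.
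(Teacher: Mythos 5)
Your proof is correct and follows the same overall strategy as the paper: compute $H_n(\bP;R_f)$ directly as the joint kernel of the $\partial_i$ on $R_f$, then run the long exact sequence of Koszul homology attached to $0\to R\to R_f\to H^1_{(f)}(R)\to 0$ against Theorem \ref{R-Th}. The only divergence is in the verification that this joint kernel is $K$: the paper first makes a linear change of variables so that $f$ is monic in $X_n$, factors $f$ into distinct irreducibles, and derives a contradiction using $\partial_n$ alone (this is where squarefreeness enters), whereas you write the element in lowest terms in $K(X_1,\ldots,X_n)$ and use all the partials together with a degree count. Your variant is valid, slightly cleaner, and correctly shows that the squarefree hypothesis is not needed for this lemma.
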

\begin{proof}
Note that
\[
H_n(\bP;R_f) = \{ v \in R_f \mid \partial_i v = 0  \ \text{for all} \ i = 1,\ldots,n \}.
\]
Clearly if $v \in R_f$ is a constant then $\partial_i v = 0$ for all $i = 1,\ldots, n$.
By a linear change in variables we may assume that $f = X_n^s + \text{lower terms in } X_n$. Note that by \ref{change-variables} the de Rahm homology does not change.

Suppose if possible there exists a non-constant $v = a/f^r \in H_n(\bP;R_f)$ where $f$ does not divide $a$ if $r \geq 1$. Note that if $r = 0$ then $v \in H_n(\bP; R) = K$. So $v$ is a constant. So assume $r \geq 1$.
Since $\partial_n(v) = 0$ we get $f \partial_n(a) = ra\partial_n(f)$.

Since $f$ is squarefree we have $f = f_1\cdots f_m$ where $f_i$ are distinct irreducible polynomials. As
$f$ is monic in $X_n$ we have that $f_i$ is monic in $X_n$ for each $i$.

Since  $f \partial_n(a) = ra\partial_n(f)$ we have that $f_i$ divides $a\partial_n(f)$ for each $i$.
Note that  if $f_i$ divides $\partial_n(f)$ then $f_i$ divides $f_1\cdots f_{i-1} \partial_n(f_i)\cdot f_{i+1}\cdots f_m$.
Therefore $f_i$ divides $\partial_n(f_i)$ which is easily seen to be a contradiction since $f_i$ is monic in $X_n$. Thus $f_i$ divides $a$ for each $i = 1,\ldots,m$. Therefore $f$ divides $a$, which is a contradiction. Thus
$H_n(\bP;R_f)$ only consists of constants.

We have an exact sequence
\[
0 \rightarrow R \rightarrow R_f  \rightarrow H^1_I(R) \rightarrow 0.
\]
Notice $H_n(\bP,R) = H_n(\bP; R_f) = K$ and $H_{n-1}(\bP,R) = 0$ (see Theorem \ref{R-Th} and Lemma \ref{R-f}).
So we get $H_n(\bP,H^1_I(R)) = 0$. Also as $H_i(\bP,R) = 0$ for $i < n$ we get 
$$H_i\left(\bP; H^1_{(f)}(R)\right) \cong H_i(\bP;R_f) \quad \text{for} \ i < n.$$

\end{proof}
\section{Proof of Theorem 1}
In this section we prove Theorem 1.
Throughout $K \subseteq L$ where $L$ is an algebraically closed field. We first prove:

\begin{lemma}\label{basic}
Let $\m = (X_1-a_1,\cdots,X_n-a_n)$, where $a_1,\ldots, a_n \in K$, be a maximal ideal in $R = K[X_1,\ldots,X_n]$.
Let $\partial = \partial_1,\ldots,\partial_n$. Then
$H_i(\partial ; H^n_{\m}(R)) = 0$ for $i > 0$ and $H_0(\partial;H^n_{\m}(R) ) = K$.
\end{lemma}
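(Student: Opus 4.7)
The plan is to reduce this to the already-computed case of the standard maximal ideal via a linear change of variables, and then invoke Theorem \ref{E-Th} directly.

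First I would perform the linear change of variables $U_i = X_i - a_i$ for $i = 1, \ldots, n$. In the notation of \ref{change-variables}, this corresponds to $D = I$ (the identity matrix) with $c_i = -a_i$, so $F = (D^{-1})^{tr} = I$. Hence $\partial/\partial U_i = \partial/\partial X_i$ for each $i$, and by the isomorphism in \ref{change-variables},
\[
H_i(\partial_1, \ldots, \partial_n; H^n_{\mathfrak{m}}(R)) \cong H_i\!\left(\frac{\partial}{\partial U_1}, \ldots, \frac{\partial}{\partial U_n}; H^n_{\mathfrak{m}}(R)\right)
\]
for every $i \geq 0$.

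Next I would identify $H^n_{\mathfrak{m}}(R)$ with $E_n$ in the $U$-coordinates. Under the change of variables, $R = K[X_1, \ldots, X_n] = K[U_1, \ldots, U_n]$ and $\mathfrak{m} = (X_1 - a_1, \ldots, X_n - a_n) = (U_1, \ldots, U_n)$. Therefore $H^n_{\mathfrak{m}}(R)$ is the top local cohomology of the polynomial ring $K[U_1, \ldots, U_n]$ at its irrelevant maximal ideal, which is precisely the injective hull $E_n$ of $K$ (expressed in the $U$-variables with the same $A_n(K)$-module structure as in Section 2).

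Finally I would apply Theorem \ref{E-Th}: $H_i(\partial/\partial U_1, \ldots, \partial/\partial U_n; E_n) = 0$ for $i > 0$ and $H_0(\partial/\partial U_1, \ldots, \partial/\partial U_n; E_n) = K$. Combined with the isomorphism above, this yields the claim. There is no real obstacle here; the only thing to verify carefully is that the linear change of variables is compatible with both the $R$-module and $A_n(K)$-module structure on $H^n_{\mathfrak{m}}(R)$, but this is exactly the content of \ref{change-variables}.
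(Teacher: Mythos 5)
Your proposal is correct and follows exactly the paper's own argument: a translation $U_i = X_i - a_i$ handled by \ref{change-variables}, reduction to the origin, identification of $H^n_{\m}(R)$ with $E_n$, and an appeal to Theorem \ref{E-Th}. No issues.
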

\begin{proof}
Let $U_i = X_i - a_i$ for $i = 1,\ldots,n$. Then by \ref{change-variables}
\[
H_i\left(\frac{\partial}{\partial U_1}, \cdots,\frac{\partial}{\partial U_n} ; H^n_{\m}(R) \right) \cong H_i\left(\frac{\partial}{\partial X_1}, \cdots,\frac{\partial}{\partial X_n} ; H^n_{\m}(R) \right)
\]
for all $i \geq 0$.
Thus we may assume $a_1 = a_2 = \cdots = a_n = 0$. Finally note that $H^n_{\m}(R) = E$ the injective hull of $ R/\m = K$. So our result follows from Theorem \ref{E-Th}.
\end{proof}
We now give a proof of Theorem 1.

\begin{proof}[Proof of Theorem 1]
Notice
\begin{align*}
A_n(L) &= A_n(K)\otimes_K L \\
\text{and} \ S = L[X_1,\cdots,X_n] &= R\otimes_K L.
\end{align*}
So $A_n(L)$ and $S$ are faithfully flat extensions of $A_n(K)$ and $R$ respectively. It follows that
\[
H_i\left(\partial ; H^n_{IS}(S)\right) \cong H_i\left(\partial ; H^n_{I}(R)\right)\otimes_K L \quad \text{for all }\ i \geq 0.
\]
Thus we may as well assume that $K = L$ is algebraically closed.
Since $I$ is zero-dimensional we have
\[
\sqrt{I} = \m_1\cap \m_2 \cap \cdots \cap \m_r,
\]
where $\m_1,\ldots,\m_r$ are distinct maximal ideals and $r = \sharp V(I)_L$, the number of points in $V(I)_L$.
By \ref{co-prime} we have an isomorphism of $A_n(K)$-modules
\[
H^j_I(R) \cong \bigoplus_{i = 0}^{r} H^j_{\m_i}(R) \quad \text{for all } \ j \geq 0.
\]
In particular
we have that
\[
H_j\left(\partial; H^n_I(R) \right) = \bigoplus_{i = 0}^{r}H_j\left(\partial; H^n_{\m_i}(R)  \right).
\]
Since $K$ is algebraically closed each maximal ideal $\m$ in $R$ is of the form $(X_1-a_1,\ldots,X_n -a_n)$.
 The result follows from Lemma \ref{basic}.
\end{proof}

\section{some computations-II}
Let $R = K[X_1,\ldots,X_n]$ and let $P = (X_1,\ldots,X_{n-1})$. The goal of this section is to compute
$H_i(\partial; H^{n-1}_P(R))$ for all $i \geq 0$.

As before it is convenient to introduce the following notation. For $i = 1, \cdots, n$ let $R_i = K[X_1,\ldots,X_i]$, $\m_i = (X_1,\ldots,X_i)$ and let $E_i $ be the injective hull of $R_i/\m_i = K$ as a $R_i$-module.

Notice that $R_{n-1} \subseteq R_n $ is a faithfully flat extension. So
\[
R_n \otimes_{R_{n-1}} H^i_{\m_{n-1}}(R_{n-1}) \cong H^i_{\m_{n-1}R_n}(R_{n}) \quad \text{for all} \ i \geq 0.
\]
Thus
\[
H^{n-1}_{\m_{n-1}R_n}(R_{n}) = E_{n-1}[X_n ]  = \bigoplus_{j\geq 0}E_{n-1}X_n^j.
\]

We first prove the following:
\begin{lemma}\label{P-basic}
$H_1(\partial_n ; E_{n-1}[X_n]) = E_{n-1}$ and $H_0(\partial_n ; E_{n-1}[X_n]) = 0$.
\end{lemma}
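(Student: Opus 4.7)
The plan is to compute both homology groups by a direct, coefficient-wise calculation on the explicit description $E_{n-1}[X_n] = \bigoplus_{j \geq 0} E_{n-1} X_n^j$. Since $\partial_n$ kills $R_{n-1}$ (and therefore kills $E_{n-1}$, which lives inside $R_n$-local cohomology of $R_{n-1}$), the action of $\partial_n$ is simply differentiation in the variable $X_n$: it sends $e X_n^j \mapsto j e X_n^{j-1}$ for $e \in E_{n-1}$.

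For $H_1(\partial_n; E_{n-1}[X_n]) = \ker \partial_n$, I would write an arbitrary element as a finite sum $v = \sum_{j \geq 0} e_j X_n^j$ with $e_j \in E_{n-1}$, compute $\partial_n v = \sum_{j \geq 1} j e_j X_n^{j-1}$, and then compare coefficients using that the decomposition is direct as $K$-vector spaces. This forces $j e_j = 0$ for every $j \geq 1$, and since $K$ has characteristic zero the scalar $j$ is invertible, giving $e_j = 0$ for all $j \geq 1$. So $\ker \partial_n$ is exactly the summand $E_{n-1} \cdot 1 \cong E_{n-1}$; the identification respects the $A_{n-1}(K)$-structure because the $A_{n-1}(K)$-action on $E_{n-1}[X_n]$ restricts to the usual action on the $j=0$ summand.

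For $H_0(\partial_n; E_{n-1}[X_n]) = E_{n-1}[X_n]/\partial_n E_{n-1}[X_n]$, I would show $\partial_n$ is surjective by exhibiting an antiderivative: given $w = \sum_{j \geq 0} f_j X_n^j$ in $E_{n-1}[X_n]$, set
\[
u = \sum_{j \geq 0} \frac{1}{j+1} f_j X_n^{j+1},
\]
which is a well-defined element of $E_{n-1}[X_n]$ (finite sum, scalars invertible in characteristic zero). A direct calculation gives $\partial_n u = w$, so $\partial_n$ is surjective and $H_0$ vanishes.

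There is no real obstacle; the only subtleties are the characteristic-zero hypothesis (which is needed both for the injectivity on the positive-degree part and for the antiderivative construction) and the observation that $\partial_n$ annihilates $E_{n-1}$, which reduces the computation to ordinary differentiation in $X_n$ over the coefficient module $E_{n-1}$. The structure of the argument is then identical to the calculation $H_0(\partial_n; R_n) = 0$, $H_1(\partial_n; R_n) = R_{n-1}$ in Lemma \ref{R-basic-Lem}, with $R_{n-1}$ replaced by $E_{n-1}$.
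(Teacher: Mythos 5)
Your proposal is correct and is essentially the same computation as the paper's proof: both exploit the decomposition $E_{n-1}[X_n] = \bigoplus_{j \geq 0} E_{n-1}X_n^j$, observe that $\partial_n$ acts as differentiation in $X_n$ with $E_{n-1}$-coefficients treated as constants, identify the kernel with the $j=0$ summand via a coefficient comparison (using characteristic zero), and prove surjectivity by the same explicit antiderivative $\frac{1}{j+1}f_jX_n^{j+1}$. The only cosmetic difference is that the paper checks these facts on monomial basis elements while you work with general finite sums; the argument is identical.
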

\begin{proof}
Let $v \in E_{n-1}[X_n]_j$. So
\[
v = \frac{c}{X_1\cdots X_{n-1} X_1^{r_1}\cdots X_{n-1}^{r_{n-1}}}\cdot X_n^j
\]
for some $c \in K$ and $r_1,\ldots,r_{n-1} \geq 0$. Notice that
\[
\partial_n(v) = \begin{cases}
                \frac{cj}{X_1\cdots X_{n-1} X_1^{r_1}\cdots X_{n-1}^{r_{n-1}}}\cdot X_n^{j-1} & \text{if} \ j \geq 1, \\
                0 & \text{if} \ j = 0.
                    \end{cases}
\]
It follows that $H_1(\partial_n ; E_{n-1}[X_n]) = E_{n-1}$.

Let $v \in E_{n-1}[X_n]_j$ be a homogeneous element. So
\[
v = \frac{c}{X_1\cdots X_{n-1} X_1^{r_1}\cdots X_{n-1}^{r_{n-1}}}\cdot X_n^j
\]
for some $c \in K$ and $r_1,\ldots,r_{n-1} \geq 0$.
Let
\[
u = \frac{c}{j+1} \cdot \frac{1}{X_1\cdots X_{n-1} X_1^{r_1}\cdots X_{n-1}^{r_{n-1}}}\cdot X_n^{j+1}.
\]
Notice that $\partial_n(u) = v$.
Thus it follows that $H_0(\partial_n ; E_{n-1}[X_n]) = 0$.
\end{proof}
Next we prove
\begin{lemma}\label{P-Lem}
For $c = 1,2,\ldots,n$ we have,
\[
H_i(\partial_c,\partial_{c+1},\cdots,\partial_n ; E_{n-1}[X_n]) = \begin{cases} 0 & \text{for} \ i \neq 1 \\ E_{c-1}& \text{for} \ i = 1.
\end{cases}
\]
\end{lemma}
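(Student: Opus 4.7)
The plan is to mimic the proof of Lemma \ref{E-Lem} essentially verbatim, doing induction on $t = n - c$ and feeding Lemma \ref{P-basic} into the short exact sequence from Lemma \ref{Koszul}. The base case $t = 0$, i.e.\ $c = n$, is exactly Lemma \ref{P-basic}: $H_1(\partial_n; E_{n-1}[X_n]) = E_{n-1}$ and the other Koszul homologies vanish.

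For the inductive step, fix $c < n$, write $\bP = \partial_c, \partial_{c+1}, \ldots, \partial_n$ and $\bP' = \partial_{c+1}, \ldots, \partial_n$, and assume the statement holds with $c$ replaced by $c+1$. Thus $H_1(\bP'; E_{n-1}[X_n]) \cong E_c$ as an $A_c(K)$-module and $H_i(\bP'; E_{n-1}[X_n]) = 0$ for $i \neq 1$. Lemma \ref{Koszul} then gives, for each $i$, a short exact sequence
\[
0 \rt H_0(\partial_c; H_i(\bP'; E_{n-1}[X_n])) \rt H_i(\bP; E_{n-1}[X_n]) \rt H_1(\partial_c; H_{i-1}(\bP'; E_{n-1}[X_n])) \rt 0.
\]
For $i = 1$, the right-hand term is $H_1(\partial_c; 0) = 0$, and the left-hand term is $H_0(\partial_c; E_c) = E_{c-1}$ by Lemma \ref{E-basic-Lem}, so $H_1(\bP; E_{n-1}[X_n]) \cong E_{c-1}$. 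For $i = 2$, the left-hand term vanishes and the right-hand term is $H_1(\partial_c; E_c) = 0$, again by Lemma \ref{E-basic-Lem}, so $H_2(\bP; E_{n-1}[X_n]) = 0$. For $i = 0$ and for $i \geq 3$, both flanking terms are zero by the induction hypothesis, forcing $H_i(\bP; E_{n-1}[X_n]) = 0$.

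There is no real obstacle here: the only non-trivial input is the computation of $H_0$ and $H_1$ of $\partial_c$ acting on $E_c$, which is Lemma \ref{E-basic-Lem}, and the functorial exact sequence of Lemma \ref{Koszul} that organises the induction on the number of partial derivatives. The result that comes out, with the homology concentrated in degree $1$ rather than degree $0$, simply reflects the extra free variable $X_n$: the module $E_{n-1}[X_n]$ differs from $E_n$ by the presence of a polynomial (as opposed to a principal-parts) factor in $X_n$, which shifts the Koszul homology in the $\partial_n$-direction by one, and the successive applications of Lemma \ref{E-basic-Lem} then preserve this shift at each step of the induction.
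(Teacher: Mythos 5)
Your proof is correct and is essentially identical to the paper's: the same induction on $t = n-c$ with base case Lemma \ref{P-basic}, the same use of the short exact sequence from Lemma \ref{Koszul}, and the same invocation of Lemma \ref{E-basic-Lem} to evaluate $H_0(\partial_c;E_c)=E_{c-1}$ and $H_1(\partial_c;E_c)=0$ in the cases $i=1$ and $i=2$. No gaps.
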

\begin{proof}
We prove the result by induction on $t = n -c$. For $t = 0$ it is just the Lemma \ref{P-basic}. Let $t \geq 1$ and assume the result for $t -1$.
Let $\bP = \partial_c, \partial_{c+1},\ldots,\partial_{n}$ and $\bP^\prime = \partial_{c+1},\ldots,\partial_{n}$.  For each $i \geq 0$ we have an exact sequence
  \[
  0 \rt H_0(\partial_c ; H_i(\bP^\prime; E_{n-1}[X_n])) \rt H_i(\bP; E_{n-1}[X_n]) \rt H_1(\partial_c ; H_{i-1}(\bP^\prime;E_{n-1}[X_n])) \rt 0.
  \]
So  $ H_i(\bP; E_{n-1}[X_n]) = 0$ for $i \geq 3$ and for $i = 0$. Notice that
\begin{align*}
H_2(\bP; E_{n-1}[X_n]) &= H_1(\partial_c; H_{1}(\bP^\prime ; E_{n-1}[X_n])) \\
                        &= H_1(\partial_c ; E_c); \ \text{(by induction hypothesis)}. \\
                        &= 0; \ \text{by Lemma \ref{E-basic-Lem}}.
\end{align*}
Similarly we have
 \begin{align*}
H_1(\bP; E_{n-1}[X_n]) &= H_0(\partial_c; H_{1}(\bP^\prime ; E_{n-1}[X_n])) \\
                        &= H_0(\partial_c ; E_c); \ \text{(by induction hypothesis)}. \\
                        &= E_{c-1}; \ \text{by Lemma \ref{E-basic-Lem}}.
\end{align*}
\end{proof}
As a corollary we obtain
\begin{theorem}\label{P-Th}
Let $R = K[X_1,\ldots,X_n]$ and let $P = (X_1,\ldots,X_{n-1})$. Let
$\bP = \partial_1,\ldots,\partial_{n}$. Then
\[
H_i(\bP ; H^{n-1}_{P}(R)) = \begin{cases} 0 & \text{for} \ i \neq 1 \\ K & \text{for} \ i = 1. \end{cases}
\]
\end{theorem}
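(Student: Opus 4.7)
The plan is to observe that this statement is the special case $c=1$ of Lemma \ref{P-Lem}, after identifying the module $H^{n-1}_{P}(R)$ with $E_{n-1}[X_n]$.

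First I would recall, as already noted at the start of this section, that the faithful flatness of $R_{n-1}\hookrightarrow R_n = R$ yields
\[
H^{n-1}_{P}(R) \;=\; H^{n-1}_{\m_{n-1}R_n}(R_n) \;\cong\; R_n \otimes_{R_{n-1}} H^{n-1}_{\m_{n-1}}(R_{n-1}) \;=\; E_{n-1}[X_n],
\]
where the last identification uses $H^{n-1}_{\m_{n-1}}(R_{n-1}) = E_{n-1}$. This isomorphism is one of $A_n(K)$-modules with the obvious action of $\partial_1,\ldots,\partial_n$, so it transports Koszul homology with respect to $\bP$ on either side.

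Next, apply Lemma \ref{P-Lem} with $c = 1$. Since in the paper's conventions $R_0 = E_0 = K$, the lemma gives
\[
H_i(\partial_1,\ldots,\partial_n;\, E_{n-1}[X_n]) \;=\; \begin{cases} E_0 = K & \text{if } i = 1, \\ 0 & \text{if } i \neq 1.\end{cases}
\]
Combined with the identification above, this is exactly the statement of the theorem. There is no serious obstacle here: all of the work has already been carried out in Lemma \ref{P-basic} (the base case $c = n$) and then propagated down by the short exact sequence of Lemma \ref{Koszul} in the inductive proof of Lemma \ref{P-Lem}. The present theorem just specializes that induction to the terminal value $c = 1$ and records the outcome in terms of the local cohomology module $H^{n-1}_P(R)$.
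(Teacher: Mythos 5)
Your proposal is correct and matches the paper's own route exactly: the paper sets up the identification $H^{n-1}_P(R)\cong E_{n-1}[X_n]$ via flat base change at the start of Section 4 and then records Theorem \ref{P-Th} as the $c=1$ case of Lemma \ref{P-Lem}, with $E_0=K$. Nothing is missing.
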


\section{Proof of Theorem 2}
In this section we prove Theorem 2.
Throughout $K \subseteq L$ where $L$ is an algebraically closed field. We first prove:
\begin{lemma}\label{basic-2}
Let $Q = (X_1-a_1X_n, \cdots,X_{n-1}-a_{n-1}X_n )$, where $a_1,\ldots, a_{n-1} \in K$, be a homogeneous prime ideal in $R = K[X_1,\ldots,X_n]$.
Let $\partial = \partial_1,\ldots,\partial_n$. Then
$H_i(\bP ; H^{n-1}_{Q}(R)) = 0$ for $i \neq 1$ and $H_1(\bP ;H^{n-1}_{Q}(R) ) = K$.
\end{lemma}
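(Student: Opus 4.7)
The plan is to mimic the proof of Lemma \ref{basic}, reducing to the already-computed case $P = (X_1,\ldots,X_{n-1})$ of Theorem \ref{P-Th} by means of a \emph{homogeneous} linear change of variables.

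First, I would introduce new variables
\[
U_i = X_i - a_i X_n \quad \text{for } i = 1,\ldots,n-1, \qquad U_n = X_n.
\]
The associated matrix $D$ is lower-triangular with $1$'s on the diagonal (so $\det D = 1$), and $c_i = 0$ for all $i$, so this is a homogeneous linear change of variables in the sense of \ref{change-variables}. In particular the hypotheses of \ref{change-variables} are met, and one obtains, for every $A_n(K)$-module $M$ and every $i \geq 0$,
\[
H_i\!\left(\frac{\partial}{\partial U_1},\ldots,\frac{\partial}{\partial U_n}\,;\, M\right) \cong H_i\!\left(\frac{\partial}{\partial X_1},\ldots,\frac{\partial}{\partial X_n}\,;\, M\right).
\]

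Next, I would observe that in the new coordinates the ideal $Q$ becomes exactly $Q = (U_1,\ldots,U_{n-1})$, which in the polynomial ring $K[U_1,\ldots,U_n]$ is the analogue of the ideal $P$ appearing in Theorem \ref{P-Th}. Hence $H^{n-1}_Q(R)$ is the module computed there, and Theorem \ref{P-Th} gives
\[
H_i\!\left(\frac{\partial}{\partial U_1},\ldots,\frac{\partial}{\partial U_n}\,;\, H^{n-1}_Q(R)\right) = \begin{cases} K & \text{if } i = 1, \\ 0 & \text{otherwise.}\end{cases}
\]
Combining with the isomorphism from the previous step yields the stated values of $H_i(\bP; H^{n-1}_Q(R))$.

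There is essentially no obstacle: the main verification is that the change of variables is invertible (immediate from the triangular form of $D$) and that $Q$ really is generated by $U_1,\ldots,U_{n-1}$ (tautological). Everything else is an immediate appeal to \ref{change-variables} and Theorem \ref{P-Th}.
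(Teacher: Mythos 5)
Your proof is correct and is essentially identical to the paper's own argument: the paper also performs the homogeneous change of variables $U_i = X_i - a_iX_n$, $U_n = X_n$, invokes \ref{change-variables} to identify the Koszul homologies, and then reduces to Theorem \ref{P-Th}. (The only cosmetic quibble is that the matrix $D$ is upper- rather than lower-triangular, which of course changes nothing.)
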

\begin{proof}
Let $U_i = X_i - a_iX_n$ for $i = 1,\ldots,n-1$ and let $U_n = X_n$. Then by \ref{change-variables}
\[
H_i\left(\frac{\partial}{\partial U_1}, \cdots,\frac{\partial}{\partial U_n} ; H^n_{\m}(R) \right) \cong H_i\left(\frac{\partial}{\partial X_1}, \cdots,\frac{\partial}{\partial X_n} ; H^n_{\m}(R) \right)
\]
for all $i \geq 0$.
Thus we may assume $a_1 = a_2 = \cdots = a_{n-1} = 0$. The result follows from Theorem \ref{P-Th}.
\end{proof}

We now give 
\begin{proof}[Proof of Theorem 2]
As shown in the proof of Theorem 1 we may assume that $K = L$ is algebraically closed. We take $X_n = 0$ to be the hyperplane at infinity. After a homogeneous linear change of variables we may assume that there are no zero's of  $V(I)$ in the hyperplane $X_n = 0$; see \ref{change-variables}.
Thus
\[
\sqrt{I} = Q_1\cap Q_2\cap \cdots \cap Q_r
\]
where $r = \sharp V(I)$ and $Q_i = (X_1-a_{i1}X_n, \cdots, X_{n-1} - a_{i,n-1}X_n)$ for $i = 1,\ldots,r$.

We first note that $H^n_I(R) = 0$. This can be easily proved by induction on $r$ and using the Mayer-Vieotoris sequence.

We prove the result by induction on $r$. For $r = 1 $ the result follows from Lemma \ref{basic-2}. So assume $r \geq 2$ and that the result holds for $r - 1$. Set $J = Q_1\cap \cdots \cap Q_{r-1}$. Then $\sqrt{I} = J \cap Q_r$.
Notice that $\sqrt{Q_r + J} = \m = (X_1,\ldots,X_n)$. By  Mayer-Vieotoris sequence and the fact that $H^n_{Q_r}(R) =  H^n_J(R) = 0$ we get an exact sequence of $R$-modules
\[
0 \rightarrow H^{n-1}_J(R)\bigoplus H^{n-1}_{Q_r}(R) \xrightarrow{\alpha} H^{n-1}_{I}(R) \rightarrow H^n_\m(R) \rightarrow 0.
\]
By \ref{Mayer-2} $\alpha$ is $A_n(K)$ linear. Set $C = \coker \alpha$. So we have an exact sequence of $A_n(K)$-modules
\[
0 \rightarrow H^{n-1}_J(R)\bigoplus H^{n-1}_{Q_r}(R) \xrightarrow{\alpha} H^{n-1}_{I}(R) \rightarrow C \rightarrow 0.
\]
\textit{Claim:} $C \cong H^n_{\m}(R)$ as $A_n(K)$-modules.

First suppose the claim is true. Then note that
the result  follows from induction hypothesis and Lemma's \ref{basic}, \ref{basic-2}.

It remains to prove the claim. Note that $C \cong H^n_{\m}(R)$ as $R$-modules. In particular
$$\soc_R(C) = \Hom_R(R/\m , C) \cong \Hom_R(R/\m , H^n_{\m}(R) ) \cong K. $$
Let $e$ be a non-zero element of $\soc_R(C)$. Consider the map
\begin{align*}
\phi \colon A_n(K) &\rt C \\
               d &\mapsto de.
\end{align*}
Clearly $\phi$ is $A_n(K)$-linear. Since $\phi(A_n(K)\m) = 0$ we get an
$A_n(K)$-linear map
\[
\ov{\phi} \colon \frac{A_n(K)}{A_n(K)\m} \rt C.
\]
Note that $A_n(K)/A_n(K)\m \cong H^n_{\m}(R)$ as $A_n(K)$-modules.

To prove that $\ov{\phi}$ is an isomorphism, note that $\ov{\phi}$ is $R$-linear. Since $\ov{\phi}$ induces an isomprhism on socles we get that $\ov{\phi}$ is injective. As $H^n_{\m}(R)$ is an injective $R$-module and $\ov{\phi}$ is injective $R$-linear map we have that $C \cong \image{\ov{\phi}} \oplus \coker{\ov{\phi}}$ as $R$-modules.
Set $N = \coker{\ov{\phi}}$.
Note that $\soc_R(N) = 0$. Also note that as $R$-module $C$ is supported only at $\m$. So $N$ is supported only at $\m$.
Since $\soc_R(N) = 0$ we get that $N = 0$. So $\ov{\phi}$ is surjective. Thus $\ov{\phi}$ is an $A_n(K)$-linear isomorphism of $A_n(K)$-modules.
\end{proof}

\section{proof of Theorem 5}
In this section we prove Theorem 5.

\s Let $A$ be a Noetherian ring, $I$ an ideal in $A$ and let $M$ be an $A$-module, not necessarily finitely generated.
Set
\[
\Gamma_I(M) = \{ m \in M \mid I^sm = 0 \ \text{for some} \ s \geq 0 \}.
\]
 The following result is well-known. For lack of a suitable reference we give sketch of a proof here. When $M$ is finitely generated, for a proof of the following result see \cite[Proposition 3.13]{E}.
\begin{lemma}\label{mod-G}[with hyotheses as above]
\[
\Ass_A \frac{M}{\Gamma_I(M)} = \{ P \in \Ass_A M \mid P \nsupseteq I \}
\]
\end{lemma}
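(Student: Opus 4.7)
The plan is to prove both inclusions directly from the definitions, using only that $A$ is Noetherian---so that every ideal (in particular $I$ and every associated prime) is finitely generated. The argument parallels the finitely generated case but requires extra care since no finiteness hypothesis on $M$ is available. The only genuine technical ingredient is the observation that an elementwise torsion condition $Pm \subseteq \Gamma_I(M)$, with $P$ finitely generated, upgrades to a uniform statement $I^s Pm = 0$ for some $s$.

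For the inclusion $\supseteq$, I would take $P = \ann_A(m) \in \Ass_A M$ with $I \not\subseteq P$. A short argument shows $m \notin \Gamma_I(M)$: otherwise $I^s \subseteq \ann_A(m) = P$ would force $I \subseteq P$ by primality. Writing $\bar m$ for the image in $M/\Gamma_I(M)$, I would then verify $\ann_A(\bar m) = P$ by noting that $am \in \Gamma_I(M)$ yields $I^s a \subseteq P$, whence $a \in P$ because $I^s \not\subseteq P$.

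For the inclusion $\subseteq$, let $P = \ann_A(\bar m) \in \Ass_A(M/\Gamma_I(M))$ with $m \notin \Gamma_I(M)$. The first step is to rule out $I \subseteq P$, which follows by applying the uniform-torsion observation to $Im$ to obtain $I^{s+1} m = 0$, contradicting $m \notin \Gamma_I(M)$. The heart of the proof is then to lift $P$ to an associated prime of $M$ itself: using $I^s Pm = 0$ and picking $t \in I^s \setminus P$ (possible because $I \not\subseteq P$ and $P$ is prime), I would set $y := tm$ and verify $\ann_A(y) = P$. The inclusion $P \subseteq \ann_A(y)$ is immediate from $I^s P m = 0$; conversely $ay = 0$ gives $at\bar m = 0$, hence $at \in P$, hence $a \in P$ by primality. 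That $y \neq 0$ follows because $tm = 0$ would force $t \in \ann_A(\bar m) = P$, contradicting the choice of $t$.

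The main obstacle is bookkeeping rather than ideas: one must invoke the Noetherian hypothesis at precisely the right moments to collapse elementwise torsion into uniform annihilation. All the substantive work is concentrated in the single step of choosing $t \in I^s \setminus P$ and checking that its action on $m$ has annihilator exactly $P$; everything else is a short calculation.
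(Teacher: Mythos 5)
Your proof is correct and complete, but it takes a genuinely different route from the paper's. The paper argues structurally: for the inclusion $\supseteq$ it uses the short exact sequence $0 \rt \Gamma_I(M) \rt M \rt M/\Gamma_I(M) \rt 0$ together with $\Ass_A M \subseteq \Ass_A \Gamma_I(M) \cup \Ass_A(M/\Gamma_I(M))$ and the observation that every associated prime of $\Gamma_I(M)$ contains $I$; for the inclusion $\subseteq$ it localizes, noting $\Gamma_I(M)_P = 0$ and hence $M_P \cong (M/\Gamma_I(M))_P$ whenever $P \nsupseteq I$, and then invokes the compatibility of associated primes with localization. You instead chase elements throughout: the one nontrivial ingredient is your uniform-torsion observation that $Pm \subseteq \Gamma_I(M)$ with $P$ (or $I$) finitely generated yields $I^sPm = 0$ for a single $s$, which lets you exhibit the explicit element $y = tm$, $t \in I^s \setminus P$, with $\ann_A(y) = P$; all the verifications ($y \neq 0$, both inclusions of annihilators, and ruling out $I \subseteq P$) are sound. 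Your approach buys self-containedness --- it produces the witnessing element directly and avoids appealing to the behavior of $\Ass$ under exact sequences and localization for modules that need not be finitely generated --- at the cost of more bookkeeping; the paper's sketch is shorter but outsources exactly the steps you carry out by hand to those standard facts.
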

\begin{proof}\textit{(sketch)}
Note that if $P \in \Ass_A \Gamma_I(M)$ then $P \supseteq I$. It follows that if $P \in \Ass_A M$ and $P \nsupseteq I$
then $P \in \Ass_A M/\Gamma_I(M)$.

It can be easily verified that if  $P \in \Ass_A M/\Gamma_I(M)$ then $P \nsupseteq I$. Also note that if $P \nsupseteq I$ then $\Gamma_I(M)_P = 0$. Thus
\[
M_P \cong \left(\frac{M}{\Gamma_I(M)}\right)_P  \quad \text{if} \ P \nsupseteq I.
\]
The result follows.
\end{proof}
 We now give
\begin{proof}[Proof of Theorem 5]
First consider the case when $K$ is algebraically closed.
Set
\[
\Ass_A(M) = \mIso_R(M) \sqcup \left( \bigcup_{i =1}^{s} V(P_i) \cap \Ass_A(M)\right).
\]
Here $P_1,\ldots,P_s$ are minimal primes of $M$ which are not maximal ideals.

Set $I = P_1P_2\cdots P_s$. Note that $\Gamma_I(M)$ is a $A_n(K)$-submodule of $M$. Set $N = M/\Gamma_I(M)$. By Lemma
\ref{mod-G} we get that
\begin{align*}
\Ass_R N &= \{ P \in \Ass_R M \mid P \nsupseteq I \} \\
         &= \mIso(M).
\end{align*}
Let $\mIso(M) = \{ \m_1,\ldots,\m_r \}$. Set $J = \m_1\m_2\cdots\m_r$. Since $\m_1,\ldots,\m_r$ are comaximal we get by
\ref{co-prime} that as $A_n(K)$-modules
\[
\Gamma_J(N) = \Gamma_{\m_1}(N)\oplus \cdots \oplus \Gamma_{\m_r}(N).
\]
Set $E = N/\Gamma_J(N)$. By Lemma
\ref{mod-G} we get that $\Ass_R E = \emptyset$. So $E = 0$. Thus
\[
N = \Gamma_{\m_1}(N)\oplus \cdots \oplus \Gamma_{\m_r}(N).
\]
Note that
\[
 \Gamma_{\m_i}(N) = E_R(R/\m_i)^{s_i} = H^n_{\m_i}(R)^{s_i} \quad \text{for some} \ s_i \geq 1.
\]
Since $K$ is algebraiclly closed we have that for each $i =1,\ldots,r$ the maximal ideal $\m_i = (X_1 - a_{i1},\ldots, X_n - a_{in})$ for some $a_{ij} \in K$.
It follows from Lemma 3.1 that
\begin{align*}
H_i(\bP;N) &= 0 \ \text{for} \ i \geq 1 \\
\dim_K H_0(\bP;N) &= \sum_{i=1}^{r}s_i.
\end{align*}
The exact sequence $0 \rt \Gamma_I(M) \rt M \rt N \rt 0$ yields an exact sequence of
de Rahm homologies
\[
0 \rt H_0(\bP; \Gamma_I(M))  \rt H_0(\bP; M) \rt H_0(\bP; N) \rt 0;
\]
since $H_1(\bP; N) = 0$. The result follows. So we have proved the result when $K$ is algebraically closed.

Now consider the case when $K$ is \emph{not} algebraically closed. Let $L = \ov{K}$ the algebraic closure of $K$.
Note that $S = L[X_1,\ldots,X_n] = R \otimes_K L$ and $A_n(L) = A_n(K)\otimes_K L$. Further notice that
$M\otimes_K L$ is a holonomic $A_n(L)$-module. Also note that $M\otimes_R S = M\otimes_K L$.

\textit{Claim-1 :} $\sharp \mIso_S(M\otimes_R S) \geq \sharp \mIso_R(M)$.

We assume the claim for the moment. Note that $H_0(\bP, M)\otimes_K L = H_0(\bP, M\otimes_K L)$.
So
\[
\dim_K H_0(\bP, M) = \dim_L H_0(\bP, M\otimes_K L) \geq \sharp \mIso_S(M\otimes_R S) \geq \sharp \mIso_R(M).
\]
The result follows.

It remains to prove  Claim-1.
By Theorem 23.2(ii) of \cite{M} we have
\begin{equation*}
\Ass_S(M\otimes_R S) = \bigcup_{P \in \Ass_R(M)} \Ass_S \left(\frac{S}{PS} \right). \tag{$\dagger$}
\end{equation*}
Suppose $\m$ is an isolated maximal prime of $M$. Notice $S/\m S$ has finite length. It follows that
\[
\sqrt{\m S} = \m_1 \cap \m_2 \cap \cdots \cap \m_r;
\]
for some maximal ideals $\m_1, \m_2,  \cdots,  \m_r$ of $S$.

\textit{Claim-2 :} $\m_1, \m_2,  \cdots,  \m_r \in \mIso_S(M\otimes_R S)$.

Note that Claim-2 implies Claim-1. It remains to prove  Claim-2.

Suppose if possible some $\m_i \notin \mIso_S(M\otimes_R S)$. Then there exist $Q \varsubsetneqq \m_i$ and $Q \in \Ass_S(M\otimes_R S)$. Note that $Q$ is not a maximal ideal in $S$. By $(\dagger)$ we have that
\[
Q \in \Ass_S \left(\frac{S}{PS} \right) \quad \text{for some} \ P \in \Ass_R(M).
\]
Notice that as $Q$ is not a maximal ideal in $S$ we have that $P$ is not a maximal ideal in $R$.
Also note that by Theorem 23.2(i) of \cite{M} we have
\[
P = Q \cap R \subseteq \m_i \cap R = \m.
\]
Thus $\m$ is not an isolated maximal prime of $M$, a contradiction.
\end{proof}
An application of Theorem 5 is the following result:
\begin{corollary}
Let $I$ be an unmixed ideal of height $\leq n-2$ in $R$. Then
\[
\sharp \Ass_R H^{n-1}_I(R) \leq \dim_K H_0\left(\bP, H^{n-1}_I(R) \right).
\]
\end{corollary}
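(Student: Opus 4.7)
The plan is to reduce the inequality to a direct application of Theorem 5. Set $M = H^{n-1}_I(R)$; by Lyubeznik's theorem $M$ is a holonomic $A_n(K)$-module, and Remark 1.1 identifies $H^n(\bP, M) = H_0(\bP, M)$. Theorem 5 then gives $\sharp \mIso_R(M) \leq \dim_K H_0(\bP, M)$, so it suffices to show that under the hypotheses one has $\Ass_R M = \mIso_R(M)$.

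For this I would first verify the support claim alluded to in the introduction: $\Supp_R H^{n-1}_I(R)$ consists only of maximal ideals. Take $P \in \Supp_R M$, so $H^{n-1}_{IR_P}(R_P) \neq 0$. Grothendieck vanishing forces $\hh P \geq n-1$; suppose for contradiction that $\hh P = n-1$ with $P$ not maximal. Since $I$ is unmixed of height $h \leq n-2$, every minimal prime of $I$ contained in $P$ has height $h$, so $\hh(IR_P) = h$; as $R_P$ is Cohen-Macaulay (being a localization of the polynomial ring $R$), this gives $\dim R_P/IR_P = n-1-h \geq 1$. The completion $\widehat{R_P}$ is a complete regular local ring, hence a domain, of dimension $n-1$ with $\dim \widehat{R_P}/I\widehat{R_P} = \dim R_P/IR_P \geq 1$. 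Hartshorne-Lichtenbaum vanishing then yields $H^{n-1}_{I\widehat{R_P}}(\widehat{R_P}) = 0$, and faithful flatness of completion forces $H^{n-1}_{IR_P}(R_P) = 0$, a contradiction.

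Once $\Supp_R M$ is confined to maximal ideals, every prime in $\Ass_R M$ is maximal; as distinct maximal ideals are pairwise incomparable under inclusion, each such prime is simultaneously maximal and minimal in $\Ass_R M$, hence lies in $\mIso_R(M)$. Thus $\Ass_R M = \mIso_R(M)$, and Theorem 5 delivers the desired bound
\[
\sharp \Ass_R H^{n-1}_I(R) = \sharp \mIso_R(M) \leq \dim_K H_0(\bP, H^{n-1}_I(R)).
\]
The one substantive step is the support computation, which hinges on using unmixedness to guarantee $\dim R_P/IR_P \geq 1$ so that Hartshorne-Lichtenbaum applies; everything else is formal manipulation of the definitions of $\mIso_R$ and the identification $H^n = H_0$ from Remark 1.1.
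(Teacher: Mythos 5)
Your proposal is correct and follows essentially the same route as the paper: use Grothendieck vanishing to exclude primes of height at most $n-2$ and Hartshorne--Lichtenbaum vanishing (after passing to the completion $\widehat{R_P}$, where unmixedness guarantees $\dim \widehat{R_P}/I\widehat{R_P} \geq 1$) to exclude height $n-1$ primes, so that $\Supp_R H^{n-1}_I(R)$ consists only of maximal ideals and $\Ass_R M = \mIso_R(M)$, whence Theorem 5 applies. Your justification of $\dim R_P/IR_P \geq 1$ via Cohen--Macaulayness of $R_P$ is a slightly more explicit version of the step the paper merely asserts.
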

\begin{proof}
We first show that $M = H^{n-1}_I(R)$ is supported only at maximal ideals of $R$. As $M$ is $I$-torsion it follows that any $P \in \Supp(M)$ contains $I$.

 We first show that if $\hh P \leq n-2 $ then $P \notin \Supp(M)$. Note $M_P = H^{n-1}_{IR_P}(R_P) =0$ by Grothendieck vanishing theorem as $\dim R_P = \hh P \leq n-2$. So $P \notin \Supp(M)$.
 
 Next we prove that $\hh P = n-1 $ then $P \notin \Supp(M)$.  
 Let $\widehat{R_P}$ be the completion of $R_P$ with respect to its maximal ideal.  As $I$ is unmixed we have $\dim R_P/I_P >0$. So $I\widehat{R_P}$ is not $P\widehat{R_P}$-primary. Therefore
  $$M_P\otimes_{R_P} \widehat{R_P} = H^{n-1}_{I\widehat{R_P}}(\widehat{R_P}) = 0,$$ by Hartshorne-Lichtenbaum Vanishing theorem.  As $\widehat{R_P}$ is a faithfully flat $R_P$ algebra we have $M_P = 0$.

Thus $M$ is supported at only maximal ideals of $R$. It follows that $\Ass_A(M) = \mIso_R(M)$. The result
now follows from Theorem 5.
\end{proof}
\section{proof of Theorem 3}
In this section we give an elementary proof of Theorem 3. Set 

\noindent$R_{n-1} = K[X_1,\ldots,X_{n-1}]$.

We begin by the following result on vanishing (and non-vanishing) of de Rahm homology of a simple $A_n(K)$-module. If $M$ is a simple $A_n(K)$-module then it is well-known that $\Ass_R(M)$ consists of a singleton set.
\begin{theorem}\label{basic-simple}
Let $M$ be a simple $A_n(K)$-module and assume $\Ass_R(M) = \{ P\}$. Set  $Q = P \cap R_{n-1}$. Then
\begin{align*}
H_0(\partial_n;M) = 0 &\implies P = QR, \\
H_1(\partial_n;M) \neq 0 &\implies P = QR.
\end{align*}
\end{theorem}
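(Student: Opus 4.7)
The plan is to prove both implications by contrapositive: assuming $P \neq QR$, I will show $H_1(\partial_n;M) = 0$ and $H_0(\partial_n;M) \neq 0$. Everything rests on producing a polynomial $g \in P$ with a useful leading term. Let $k = \operatorname{Frac}(R_{n-1}/Q)$. Since $P\cap R_{n-1} = Q$ and $P \neq QR$, the image of $P$ in $k[X_n]$ is a non-zero prime ideal in a PID, hence generated by an irreducible polynomial $\bar g$ of some degree $d \geq 1$. Because $K$ has characteristic zero, $\bar g$ is separable, so $\gcd(\bar g,\bar g') = 1$ in $k[X_n]$. Lifting and clearing denominators yields
\[
g = c_d X_n^d + c_{d-1}X_n^{d-1} + \cdots + c_0 \in P, \qquad c_i \in R_{n-1},\ c_d \notin Q.
\]
Consequently $c_d \notin P$, and $\Ass_R(M) = \{P\}$ makes $c_d$ a non-zero-divisor on $M$. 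Fix $m_0 \in M$ with $\ann_R m_0 = P$, so $gm_0 = 0$; since $M = A_n(K)m_0$ by simplicity, a routine induction using the Leibniz identity $g^{k+1}\partial_i w = \partial_i(g^{k+1}w) - (k+1)g^k(\partial_i g)w$ shows $g^{|\alpha|+1}\partial^\alpha m_0 = 0$ for every multi-index $\alpha$, so $M$ is $(g)$-torsion.

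For $H_1(\partial_n;M) = 0$: let $v \in \ker \partial_n$. Leibniz gives $\partial_n^{kd}(X_n^j v) = 0$ for $j < kd$ and $\partial_n^{kd}(X_n^{kd}v) = (kd)!\,v$, since every $\partial_n$-factor that does not eventually land on $X_n^{kd}$ annihilates $v$. Hence only the top term $c_d^k X_n^{kd}$ of $g^k$ survives and
\[
\partial_n^{kd}(g^k v) = c_d^k(kd)!\,v.
\]
Choosing $k$ with $g^k v = 0$ (possible by the previous paragraph) forces $v = 0$, since $c_d$ is a non-zero-divisor on $M$ and $(kd)! \neq 0$ in characteristic zero.

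For $H_0(\partial_n;M) \neq 0$, which I expect to be the main obstacle, suppose for contradiction $\partial_n M = M$ and pick $m_1 \in M$ with $\partial_n m_1 = m_0$; since $m_0 \neq 0$, also $m_1 \neq 0$. Let $N \geq 1$ be minimal with $g^N m_1 = 0$. Using $gm_0 = 0$ and writing $g' = \partial_n g$,
\[
0 = \partial_n(g^N m_1) = N g^{N-1}g'\,m_1,
\]
so $g^{N-1}g'\,m_1 = 0$. Setting $w := g^{N-1}m_1$, which is non-zero by minimality, we have $gw = g'w = 0$. From $\gcd(\bar g,\bar g') = 1$ in $k[X_n]$, a Bezout identity cleared of denominators produces $a,b \in R$ and $c \in R_{n-1}\setminus Q$ with $ag + bg' = c + q$ for some $q \in QR \subseteq P$. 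Since $c \notin P$, we have $c + q \notin P$, so $c + q$ is a non-zero-divisor on $M$; yet $(c+q)w = (ag + bg')w = a(gw) + b(g'w) = 0$, forcing $w = 0$---a contradiction. The non-routine ingredient is precisely this Bezout step: the separability of $\bar g$ over $k$, automatic in characteristic zero, is what makes $(g,g') + QR$ meet $R_{n-1}\setminus Q$ at all.
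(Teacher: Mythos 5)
Your proof is correct, but it takes a genuinely different route from the paper's. The paper argues the two implications directly: from $H_0(\partial_n;M)=0$ (resp.\ $H_1(\partial_n;M)\neq 0$) it produces an element whose annihilator $I$ satisfies $\partial_n(I)\subseteq I$ and $\sqrt{I}=P$, and then invokes the key Lemma \ref{ec} / Corollary \ref{ec-P}, which says that a $\partial_n$-stable ideal with radical $P$ forces $P=(P\cap R_{n-1})R$. You instead argue by contrapositive: assuming $P\neq QR$ you extract a polynomial $g\in P$ whose image in $\operatorname{Frac}(R_{n-1}/Q)[X_n]$ is (up to a unit) the irreducible generator of the image of $P$, show $M$ is $(g)$-torsion via $M=A_n(K)m_0$ and Leibniz, and then kill $\ker\partial_n$ by the explicit identity $\partial_n^{kd}(g^kv)=c_d^k(kd)!\,v$ and rule out surjectivity of $\partial_n$ by a Bezout identity coming from separability of $\bar g$. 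All the steps check out: the leading coefficient $c_d\notin Q$ is indeed a non-zerodivisor on $M$ because $\Ass_R(M)=\{P\}$, the minimality of $N$ makes $w=g^{N-1}m_1\neq 0$, and $c+q\notin P$ since $q\in QR\subseteq P$ while $c\in R_{n-1}\setminus Q$. What the two approaches buy is different: the paper's stable-ideal lemma isolates a reusable statement that transfers almost verbatim to the power-series setting of Section 8 (Lemma \ref{ec-4}), where your leading-coefficient device would need Weierstrass preparation; your argument, on the other hand, is self-contained, bypasses the stable-ideal machinery entirely, and makes the dependence on characteristic zero completely explicit (separability of $\bar g$ and invertibility of $N$ and $(kd)!$). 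Note that both proofs use characteristic zero in an essential way, so neither gains generality there.
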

To prove the above theorem we need a criterion for an ideal $I$ to be equal to $(I\cap R_{n-1})R$. This is provided by the following:
\begin{lemma}\label{ec}
Let $I$ be an ideal in $R$. Set  $J = I \cap R_{n-1}$. Then the following are equivalent:
\begin{enumerate}[\rm (1)]
\item
$\partial_n(I) \subseteq I$.
\item
$I = J R$.
\item
Let $\xi \in I$. Let $\xi = \sum_{j = 0}^{m}c_jX_n^j$ where $c_j \in R_{n-1}$ for  $j = 0,\ldots,m$. Then $c_j \in I$ 
for each $j$.
\end{enumerate}
\end{lemma}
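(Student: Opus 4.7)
The plan is to prove the cyclic implications $(3) \Rightarrow (2) \Rightarrow (1) \Rightarrow (3)$, each by a short direct argument. The characteristic zero hypothesis on $K$ (in force throughout the paper) will enter only in the final implication.

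For $(3) \Rightarrow (2)$: write any $\xi \in I$ uniquely as $\xi = \sum_{j=0}^{m} c_j X_n^j$ with $c_j \in R_{n-1}$. By (3) each $c_j$ lies in $I$, hence in $I \cap R_{n-1} = J$, so $\xi \in JR$. The reverse inclusion $JR \subseteq I$ is immediate from $J \subseteq I$.

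For $(2) \Rightarrow (1)$: every element of $I = JR$ is an $R$-linear combination of elements of $J \subseteq R_{n-1}$. Since $\partial_n$ kills $R_{n-1}$, the Leibniz rule gives $\partial_n(\sum a_k y_k) = \sum \partial_n(a_k) y_k \in JR = I$ for $a_k \in R$, $y_k \in J$.

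For $(1) \Rightarrow (3)$: I would argue by descending induction on the top degree in $X_n$. Given $\xi = \sum_{j=0}^{m} c_j X_n^j \in I$ with $c_j \in R_{n-1}$, assumption (1) gives $\partial_n^m(\xi) \in I$. But $\partial_n^m(\xi) = m!\, c_m$, and since $\operatorname{char} K = 0$ the scalar $m!$ is a unit, so $c_m \in I$. Then $\xi - c_m X_n^m \in I$ has strictly smaller $X_n$-degree, and the induction hypothesis handles the remaining coefficients $c_0, \ldots, c_{m-1}$.

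The only genuinely delicate point is the last implication, where one must extract the top coefficient using iterated differentiation; this is exactly where characteristic zero is used, and the argument would fail in positive characteristic once $m \geq p$. The other two implications are essentially formal.
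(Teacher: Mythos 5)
Your proposal is correct and uses essentially the same argument as the paper: the key implication $(1)\Rightarrow(3)$ is proved identically, by extracting the top coefficient via $\partial_n^m(\xi)=m!\,c_m$ and descending induction, with the other implications being formal. The only cosmetic difference is that you close the cycle via $(2)\Rightarrow(1)$ using the Leibniz rule, whereas the paper proves $(2)\Rightarrow(3)$ directly; both are immediate.
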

\begin{proof}
We first prove $(1) \implies (3)$. Let  $\xi \in I$. Let $\xi = \sum_{j = 0}^{m}c_jX_n^j$ where $c_j \in R_{n-1}$ for  $j = 0,\ldots,m$. 
Notice $\partial_n^m(\xi) = m!c_m$. So $c_m \in I$. Thus $\xi - c_mX_n^m \in  I$. Iterating we obtain that $c_j \in I$ for all $j$.

Notice that $(3) \implies (1)$ is trivial. We now show $(3) \implies (2)$.  Let $\xi \in I$. Let $\xi = \sum_{j = 0}^{m}c_jX_n^j$ where $c_j \in R_{n-1}$ for  $j = 0,\ldots,m$. By hypothesis $c_j \in I$
for each $j$. Notice $c_j \in I \cap R_{n-1} = J$. Thus $I \subseteq JR$. The assertion $JR \subseteq I$ is trivial. So $I = JR$.

Finally we prove that $(2) \implies (3)$. If  $b \in J$ and $r \in R$ then notice that if $br = \sum_{j = 0}^{m}c_jX_n^j$ where $c_j \in R_{n-1}$ for  $j = 0,\ldots,m$ then each $c_j \in J$. As $I = JR$ each $\xi \in I$ is a finite sum $b_1r_1 + \cdots+ b_sr_s$ where $b_i \in J$ and $r_i \in R$. The assertion 
follows.
\end{proof}
The following corollary is useful.
\begin{corollary}\label{ec-P}
Let $P$ be a prime ideal in $R$ and let $I$ be an ideal in $R$ with $\sqrt{I} = P$. If $\partial_n(I) \subseteq I$ then $P = (P\cap R_{n-1})R$.
\end{corollary}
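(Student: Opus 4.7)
The plan is to reduce the statement to Lemma \ref{ec} applied to the prime $P$ itself: if we can establish that $\partial_n(P)\subseteq P$, then Lemma \ref{ec} (with $I$ replaced by $P$) immediately yields $P = (P\cap R_{n-1})R$. So the whole content of the corollary is the implication
\[
\partial_n(I)\subseteq I\ \text{and}\ \sqrt{I} = P \ \Longrightarrow\ \partial_n(P)\subseteq P.
\]

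To prove this implication, I would take $\xi \in P$ and choose $k\geq 1$ with $\xi^k \in I$. The key observation is that, by iterating the hypothesis $\partial_n(I)\subseteq I$, one has $\partial_n^k(I)\subseteq I$, so in particular $\partial_n^k(\xi^k) \in I \subseteq P$. Now expand $\partial_n^k(\xi^k)$ by the Leibniz rule for $k$ equal factors: the only multi-index of non-negative integers $(i_1,\ldots,i_k)$ summing to $k$ in which no entry is zero is $(1,1,\ldots,1)$, and all other terms of the Leibniz expansion have some factor $\partial_n^0(\xi) = \xi$. Grouping these terms as $\xi\cdot A$ for some $A\in R$, one obtains
\[
\partial_n^k(\xi^k) \;=\; k!\,(\partial_n \xi)^k \;+\; \xi\cdot A.
\]
Both $\partial_n^k(\xi^k)$ and $\xi\cdot A$ lie in $P$, hence $k!\,(\partial_n \xi)^k \in P$. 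Since $K$ has characteristic zero, $k!$ is a unit, so $(\partial_n\xi)^k \in P$, and primality of $P$ forces $\partial_n\xi \in P$. This gives $\partial_n(P)\subseteq P$, and the corollary follows from Lemma \ref{ec}.

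There is no real obstacle here: the only subtlety is ensuring that the non-diagonal Leibniz terms really do all contain a factor of $\xi$ (which is why one applies $\partial_n$ exactly $k$ times, matching the exponent of $\xi^k$, so that the multi-indices of length $k$ summing to $k$ with a zero entry give the $\xi$-divisible remainder), and that characteristic zero is used precisely to cancel the coefficient $k!$. Both are routine once the Leibniz expansion is written down.
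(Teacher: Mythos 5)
Your proof is correct, but it takes a genuinely different route from the paper's. The paper never establishes $\partial_n(P)\subseteq P$ directly; instead it writes $\xi=\sum_{j=0}^m c_jX_n^j\in P$, notes $\xi^s\in I$ with leading $X_n$-coefficient $c_m^s$, applies the implication $(1)\Rightarrow(3)$ of Lemma \ref{ec} to $I$ to conclude $c_m^s\in I\subseteq P$, hence $c_m\in P$, and then iterates on $\xi-c_mX_n^m$ to verify condition $(3)$ of Lemma \ref{ec} for $P$. You instead prove the cleaner intermediate statement that the radical of a $\partial_n$-stable ideal is $\partial_n$-stable in characteristic zero (essentially Seidenberg's lemma on differential ideals), via the Leibniz expansion $\partial_n^k(\xi^k)=k!\,(\partial_n\xi)^k+\xi A$ and primality of $P$, and then invoke $(1)\Rightarrow(2)$ of Lemma \ref{ec} for $P$. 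Your argument is more portable: it uses nothing about $R$ beyond its being a commutative $\mathbb{Q}$-algebra with a derivation, so it would transfer verbatim to the power series setting of Corollary \ref{ec-P-4}, where the paper has to redo the coefficient-extraction with a Krull-intersection argument. What the paper's approach buys is economy within its own framework --- it reuses the explicit coefficient machinery of Lemma \ref{ec} that is needed anyway --- whereas yours isolates a self-contained differential-algebra fact. Both uses of characteristic zero (your cancellation of $k!$, the paper's cancellation of $m!$ inside Lemma \ref{ec}) are genuine and parallel.
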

\begin{proof}
Set $Q = P \cap R_{n-1}$. Let $\xi \in P$. Let $\xi = \sum_{j = 0}^{m}c_jX_n^j$ where $c_j \in R_{n-1}$ for  $j = 0,\ldots,m$. Notice $\xi^s \in I$ for some $s \geq 1$. Also $\xi^s = c_m^{s}X_n^{sm} + .. $ lower terms in $X_n$. By Lemma \ref{ec} we get that $c_m^s \in I$. It follows that $c_m \in P$.
Thus $\xi - c_mX_n^m \in  P$. Iterating we obtain that $c_j \in P$ for all $j$.
So by Lemma \ref{ec} we get that $P = QR$.
\end{proof}
We now give
\begin{proof}[Proof of Theorem \ref{basic-simple}]
First suppose $H_0(\partial_n, M) = 0$. Let $a \in M$ with $P = (0 \colon a)$. Say $\partial_n b = a$. Set $I = (0\colon b)$. 

We first claim that $I \subseteq P$. Let $\xi \in I^2$. Notice $\partial_n \xi = \xi \partial_n + \partial_n(\xi)$. Also note that $\partial_n(\xi) \in I$. So we have that $\partial_n \xi b = \xi a + \partial_n(\xi)b$. Thus $\xi a = 0$. So $\xi \in P$. Thus $I^2 \subseteq P$. As $P$ is a prime ideal we get that $I \subseteq P$.

Next we claim that $\partial_n(I) \subseteq I$. Let $\xi \in I$. We have $\partial_n \xi b = \xi a + \partial_n(\xi)b$. So $\partial_n(\xi)b = 0$. Thus $ \partial_n(\xi) \in I$.

Since $M$ is simple we have that $M = A_n(K)a$.  So $b = d a$ for some $d \in A_n(K)$. It can be easily verified that
there exists $s \geq 1$ with $P^sd \subseteq A_n(K)P$.  It follows that $P^s \subseteq I$. Thus $\sqrt{I} = P$. The result
follows from \ref{ec-P}. 

Next suppose $H_1(\partial_n; M) \neq 0$. Say $a \in  \ker \partial_n $ is non-zero. Set $J = (0 \colon a)$. Let $\xi \in J$.  Notice $\partial_n \xi a = \xi \partial_na + \partial_n(\xi)a$. Thus $\partial_n(\xi) a = 0$. Thus $\partial_n(J) \subseteq J$.

By hypothesis $M$ is simple and $\Ass_R(M) = \{ P \}$. Now $\Gamma_P(M)$ is a non-zero $A_n(K)$-submodule of $M$. As $M$ is simple we have that 
$M = \Gamma_P(M)$.  Thus $P^s a = 0$ for some $s \geq 1$. Thus $P^s \subseteq J$. Also note that for any $R$-module $E$ the maximal elements in the set
$\{ (0 \colon e) \mid e \neq 0 \}$ are associate primes of $E$. Thus $J = (0 \colon a) \subseteq P$. Therefore $\sqrt{J} = P$.  The result
follows from \ref{ec-P}. 
\end{proof}

\begin{remark}\label{cl-R}
Let $P$ be a prime ideal in $R$. Set $Q = P\cap R_{n-1}$. Then it can be easily seen that
\[
\hh_R P  - 1 \leq \hh_{R_{n-1}} Q  \leq \hh_R P.
\]
Furthermore $\hh_{R_{n-1}} Q  = \hh_R P$ if and only if $P = QR$.
\end{remark}

\begin{remark}\label{mod}
Let $M$ be a holonomic $A_n(K)$-module. Assume $M$ is $I$-torsion. Set $J = I \cap R_{n-1}$. Then for $i=0,1$ the Koszul homology modules $H_i(\partial_n, M)$ are $J$-torsion holonomic $A_{n-1}(K)$-modules. For holonomicity see 1.2. Also note the sequence
\[
0 \rt H_1(\partial_n, M) \rt M \xrightarrow{\partial_n} M \rt H_0(\partial_n, M) \rt 0
\]
is an exact sequence of $A_{n-1}(K)$-modules. It follows that $H_i(\partial_n, M)$ are $J$-torsion for $i = 0,1$.
\end{remark}

\s \label{dim-ineq} Let $M$ be a $R$-module, not-necessarily finitely generated. By $\dim M $ we mean dimension of support of $M$. We set $\dim 0 = - \infty$.
It can be easily seen that the following are equivalent:
\begin{enumerate}
\item
$\dim M \leq n-i$.
\item
$M_P = 0$ for all primes $P$ with
 $\hh P < i$.
\end{enumerate}

\s \label{dim-simple} Let $M$ be a holonomic $A_n(K)$-module. Let $c = \ell_{A_n(K)}(M)$. So we have a composition series
\[
0 = V_0 \subsetneq V_1 \subsetneq V_2 \subsetneq \cdots \subsetneq V_c = M. 
\]
For $i =1,\ldots,c$, $C_i = V_i/V_{i-1}$ are simple holonomic $A_n(K)$-modules. Let $\Ass C_i = \{ P_i \}$. Set $d_i = \hh P_i$ and let $d = \min_{i}\{ d_i\}$. Then $$\dim M = n -d.$$
To see this let $d_j = d$. Set $P = P_j$. Then $(C_j)_P \neq 0$. So $(V_j)_P \neq 0$. So $M_P \neq 0$. Thus $\dim M \geq n -d$.   If $Q \in  \Spec(R)$ with $\hh Q < d$ then note that $P_i \nsubseteq Q$ for all $i$. Therefore $(C_i)_Q = 0$ for all $i$. It follows that $M_Q = 0$. Therefore $\dim M \leq n -d$ by \ref{dim-ineq}. Thus
$\dim M = n - d$. 

To prove Theorem 3 by induction we need the following:
\begin{lemma}\label{dim-lemm}
Let 
\[
0 = V_0 \subsetneq V_1 \subsetneq V_2 \subsetneq \cdots \subsetneq V_c = M. 
\]
be a composition series of a holonomic-module $M$. 
For $i =1,\ldots,c$ set $C_i = V_i/V_{i-1}$.
Then
\begin{enumerate}[\rm(1)]
\item
$\displaystyle{\dim H_0(\partial_n; M) \leq \max_{i}\{\dim H_0(\partial_n; C_i) \} \leq \dim M.}$
\item
$\displaystyle{\dim H_1(\partial_n; M) \leq \max_{i}\{\dim H_1(\partial_n; C_i) \} \leq \dim M-1.}$
\end{enumerate}
\end{lemma}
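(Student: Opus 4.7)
The plan is to prove both inequalities of each part separately. The inner inequalities, which pass from the simple composition factors $C_i$ to $M$, come from a straightforward induction on $i$ along the composition series using the Koszul long exact sequence for a single element. The outer inequalities, which bound a single $\dim H_j(\partial_n; C_i)$ against $\dim M$ (or $\dim M - 1$), will follow from the identity $\max_i \dim C_i = \dim M$ supplied by \ref{dim-simple} once the analogous bound is established for a simple module.

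For the inner inequalities I would apply $H_\ast(\partial_n;-)$ to $0 \to V_{i-1} \to V_i \to C_i \to 0$ to obtain the six-term exact sequence
\[
0 \to H_1(\partial_n; V_{i-1}) \to H_1(\partial_n; V_i) \to H_1(\partial_n; C_i) \to H_0(\partial_n; V_{i-1}) \to H_0(\partial_n; V_i) \to H_0(\partial_n; C_i) \to 0.
\]
Reading off $\Supp_{R_{n-1}}$ from the exact pieces gives $\Supp H_j(\partial_n; V_i) \subseteq \Supp H_j(\partial_n; V_{i-1}) \cup \Supp H_j(\partial_n; C_i)$ for $j = 0, 1$, and induction on $i$ starting from $V_0 = 0$ yields $\dim H_j(\partial_n; M) \leq \max_i \dim H_j(\partial_n; C_i)$.

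For the outer inequalities I would reduce to the case of a single simple holonomic $A_n(K)$-module $C$ with $\Ass_R(C) = \{P\}$, writing $d = \hh_R P$ and $Q = P \cap R_{n-1}$. Since $\Gamma_P(C)$ is a nonzero $A_n(K)$-submodule of the simple module $C$, we have $C = \Gamma_P(C)$, whence $\Supp_R(C) = V(P)$ and $\dim C = n - d$. Remark \ref{mod} then shows that both $H_0(\partial_n; C)$ and $H_1(\partial_n; C)$ are $Q$-torsion $A_{n-1}(K)$-modules, hence supported in $V(Q) \subset \Spec R_{n-1}$, so $\dim H_j(\partial_n; C) \leq (n-1) - \hh_{R_{n-1}} Q$ for $j = 0, 1$. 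The height inequality $\hh_{R_{n-1}} Q \geq d - 1$ from Remark \ref{cl-R} at once gives $\dim H_0(\partial_n; C) \leq n - d = \dim C$, which together with the inductive step proves part (1).

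The substantive content of the lemma is the extra $-1$ in part (2), and this is where I expect the main obstacle to lie. If $H_1(\partial_n; C) = 0$ the bound is vacuous, so assume $H_1(\partial_n; C) \neq 0$. Theorem \ref{basic-simple} then forces $P = QR$, and the equality clause of Remark \ref{cl-R} upgrades the height estimate to $\hh_{R_{n-1}} Q = d$. This sharpens the bound to $\dim H_1(\partial_n; C) \leq (n-1) - d = \dim C - 1$; combined with the inductive step and the identity $\max_i \dim C_i = \dim M$ this completes part (2). Everything else in the argument is bookkeeping with exact sequences and the general height inequality between an ideal and its contraction, so the dichotomy supplied by Theorem \ref{basic-simple} is really the only non-formal input.
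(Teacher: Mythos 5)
Your proposal is correct and follows essentially the same route as the paper: the inner inequalities come from the six-term Koszul exact sequences along the composition series (your support-containment induction is the same as the paper's localization at primes of small height), and the outer inequalities combine Remark \ref{mod} ($Q_i$-torsion), Remark \ref{cl-R}, \ref{dim-simple}, and, for the crucial $-1$ in part (2), the dichotomy of Theorem \ref{basic-simple} forcing $P_i = Q_iR$ when $H_1(\partial_n;C_i) \neq 0$. The only cosmetic difference is that the paper treats the case $\dim M = 0$ of part (2) separately, whereas your argument absorbs it via the impossibility of $\hh_{R_{n-1}}Q_i = n$.
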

\begin{proof}
For $i = 1,\ldots,c$ we have an exact sequence
\begin{align*}
0 &\rt H_1(\partial_n;V_{i-1})  \rt H_1(\partial_n;V_{i}) \rt H_1(\partial_n; C_{i}) \\
   &\rt H_0(\partial_n;V_{i-1})  \rt H_0(\partial_n;V_{i}) \rt H_0(\partial_n; C_{i}) \rt 0.
\end{align*}
Let $\Ass C_i = \{ P_i \}$ and $d_i = \hh P_i$. Set $Q_i = P_i \cap R_{n-1}$.

(1) We prove the first inequality.
Suppose  if possible $H_0(\partial_n; C_i) = 0$ for all $i$. Then by the above exact sequence we get $H_0(\partial_n;V_{i}) = 0$
for all $i$. So $H_0(\partial_n,M) = 0$. Therefore the first inequality holds in this case.

Now suppose $H_0(\partial_n; C_i) \neq 0$ for some $i$. Set 
$$ \max_{i}\{\dim H_0(\partial_n; C_i) \} = n-1-c \quad \text{for some} \ c  \geq 0. $$
If $c = 0$ then we have nothing to prove.
Now suppose $c > 0$. Let $P$ be a prime in $R$ with $\hh P < c$. Then $H_0(\partial_n; C_i)_P = 0$ for all $i$.
By the above exact sequence we get $H_0(\partial_n;V_{i})_P = 0$
for all $i$. So $H_0(\partial_n,M)_P = 0$. Thus by \ref{dim-ineq} we get $\dim H_0(\partial_n,M) \leq n-1-c$.

We now prove that $\dim H_0(\partial_n, C_i) \leq \dim M$ for all $i$. Set $ N_i = H_0(\partial_n, C_i)$. We have nothing to prove if $N_i = 0$. So assume $N_i \neq 0$. By \ref{mod}, $N_i$ is $Q_i$-torsion. By \ref{cl-R} we have
$\hh Q_i \geq d_i - 1$. If $Q$ is a prime ideal in $R_{n-1}$ with $\hh Q < d_i -1$ then $Q \nsupseteq Q_{i}$. So $(N_i)_Q = 0$. By \ref{dim-ineq} 
$$\dim N_i  \leq n-1 -(d_i -1) = n - d_i \leq \dim M.$$
Here the last inequality follows from \ref{dim-simple}.

(2). The proof of the first inequality is same as that in $(1)$. Set $W_i = H_1(\partial_n, C_i)$. We prove 
$\dim W_i \leq \dim M - 1$  for all $i$.

If $\dim M = 0$ then note that $d_i = n$ for all $i$. So $P_i$ is a maximal ideal in $R$. It follows that $P_i \neq Q_i R$. So by Theorem \ref{basic-simple} we get $W_i =0$.

Now assume $\dim M  \geq 1$. If $W_i = 0$ then we have nothing to prove. So assume $W_i \neq 0$. Then by 
Theorem \ref{basic-simple} we have  $P_i = Q_i R$. So by \ref{cl-R} $\hh Q_i = \hh P_i = d_i$. By \ref{mod} $W_i$ is $Q_i$-torsion.
If $Q$ is a prime ideal in $R_{n-1}$ with $\hh Q < d_i $ then $Q \nsupseteq Q_{i}$. So $(W_i)_Q = 0$. By \ref{dim-ineq} 
$$\dim W_i  \leq n-1 -d_i    \leq \dim M -1.$$
Here the last inequality follows from \ref{dim-simple}.
\end{proof}

We now give 
\begin{proof}[Proof of Theorem 3]
We prove by induction on $n$ that $H_i(\bP, M) = 0$ for $i > \dim M$.    We first consider the case when $n = 1$. 
We have nothing to prove when $\dim M = 1$. If $\dim M = 0$ then $M$ is only supported at maximal ideals.
Let 
\[
0 = V_0 \subsetneq V_1 \subsetneq V_2 \subsetneq \cdots \subsetneq V_c = M. 
\]
be a composition series of  $M$. 
For $i =1,\ldots,c$ set $C_i = V_i/V_{i-1}$. Let $P_i = \Ass C_i$. Then $P_i$ is a maximal ideal of $R$. By \ref{basic-simple} we have $H_1(\partial_1,C_i) = 0$ for all $i$. So $H_1(\partial_1,M) = 0$.

Now assume $n \geq 2$. Let $\ov{M} = H_0(\partial_n,M)$ and $M_0 = H_1(\partial_n,M)$.  Set $\bP^\prime = \partial_1,\ldots,\partial_{n-1}$. Then we have an exact sequence
\[
\cdots \rt H_{j+1}(\partial^\prime; \ov{M}) \rt H_{j-1}(\partial^\prime; M_0) \rt H_j(\partial;M) \rt H_j(\partial^\prime;\ov{M}) \rt \cdots
\]
By Lemma \ref{dim-lemm} we have $\dim \ov{M} \leq \dim M$ and $\dim M_0 \leq \dim M -1$.
So for $j > \dim M$ we have, by induction hypothesis, $H_j(\partial^\prime;\ov{M}) = 0$ and $H_{j-1}(\partial^\prime; M_0) = 0$.
So $H_j(\partial;M) = 0$.
\end{proof}

\section{proof of Theorem 4}
In this section we prove Theorem 4. We only prove it in the case of $\cO_n = K[[X_1,\ldots,X_n]]$. The case of convergent power series rings is similar. The proof of Theorem 4 follows in the same pattern as in proof of Theorem 3. 
Only Lemma 7.2, 7.3, 7.8 and Remark 7.4 need an explanation. 

\begin{remark}
 Let $M$ be a holonomic $\cD_n$-module. Then $H_1(\partial_n; M)$ is a holonomic $\cD_{n-1}$-module; see \cite{v1}. However 
$H_0(\partial_n; M)$ need not be a holonomic $\cD_{n-1}$-module; see \cite{v2}. Nevertheless there exists a change of variables such that $H_i(\partial_n; M)$ are holonomic $\cD_{n-1}$-modules for $i= 0,1$; see \cite{v3}.

Iteratively it follows that there exists a change of variables such that $H_i(\partial^\prime; M)$ is finite dimensional $K$-vector spaces for $i \geq 0$. Note that $H_i(\partial; M) \cong H_i(\partial^\prime; M)$ for all $i \geq 0$ it follows that $H_i(\partial; M)$ are finite dimensional $K$-vector spaces.
\end{remark}

We first generalize Lemma \ref{ec}.
\begin{lemma}\label{ec-4}
Let $I$ be an ideal in $\cO_n$. Set  $J = I \cap \cO_{n-1}$. Then the following are equivalent:
\begin{enumerate}[\rm (1)]
\item
$\partial_n(I) \subseteq I$.
\item
$I = J \cO_{n}$.
\item
Let $\xi \in I$. Let $\xi = \sum_{j = 0}^{\infty}c_jX_n^j$ where $c_j \in \cO_{n-1}$ for  $j \geq 0$. Then $c_j \in I$ 
for each $j$.
\end{enumerate}
\end{lemma}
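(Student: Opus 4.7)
The plan is to mimic the polynomial proof of Lemma \ref{ec}, with the $X_n$-adic topology of $\cO_n$ taking the place of the degree argument. The implication $(2)\Rightarrow(1)$ is immediate: if $I=J\cO_n$ with $J\subseteq \cO_{n-1}$, then writing any $\xi\in I$ as a finite sum $\sum a_ib_i$ with $a_i\in J$ and $b_i\in\cO_n$ gives $\partial_n\xi=\sum a_i\partial_nb_i\in J\cO_n=I$. For $(2)\Leftrightarrow(3)$, expanding $b_i=\sum_j b_{i,j}X_n^j$ and collecting coefficients shows $(2)\Rightarrow(3)$. Conversely, if $c_j\in J$ for all $j$, then the truncations $\xi_N=\sum_{j<N}c_jX_n^j$ lie in $J\cO_n$ and $\xi-\xi_N\in (X_n)^N$, so $\xi\in\bigcap_N (J\cO_n+(X_n)^N)=J\cO_n$ by Krull's intersection theorem applied in the Noetherian local ring $\cO_n/J\cO_n$.

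The nontrivial direction is $(1)\Rightarrow(3)$. The polynomial argument applies $\partial_n^m$ to isolate the top coefficient, a move unavailable when $\xi$ has an infinite $X_n$-expansion. Instead I would bootstrap from the bottom via an auxiliary lemma: \emph{assuming $\partial_n(I)\subseteq I$, if $u\in\cO_n$ satisfies $u+X_n\partial_n u\in I$, then $u\in I$.} Granted this, write $\xi=c_0+X_nu_0$ with $u_0\in\cO_n$; then $\partial_n\xi=u_0+X_n\partial_nu_0\in I$, so the lemma gives $u_0\in I$, hence $c_0=\xi-X_nu_0\in I$. Iterating with $u_0=c_1+X_nu_1$, and so on, yields $c_j\in I$ for all $j\geq 0$.

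To prove the auxiliary lemma I would show by induction on $k\geq 0$ that $u\in X_n^k\cO_n+I$, and then conclude $u\in I$ from $\bigcap_k (X_n^k\cO_n+I)=I$, once again by Krull. For the inductive step, write $u=X_n^kw+a$ with $a\in I$. A direct computation yields
\[
u+X_n\partial_nu=(k+1)X_n^kw+X_n^{k+1}\partial_nw+(a+X_n\partial_na),
\]
and since $a,\partial_na\in I$ (using $\partial_n(I)\subseteq I$), this forces $(k+1)X_n^kw\in X_n^{k+1}\cO_n+I$; as $k+1$ is a unit in characteristic zero, $X_n^kw\in X_n^{k+1}\cO_n+I$, giving $u\in X_n^{k+1}\cO_n+I$. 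The main obstacle is precisely this replacement of the polynomial degree argument by a topological bootstrapping, with Krull's intersection theorem furnishing the passage from $\bigcap_k(X_n^k\cO_n+I)$ back to $I$; everything else parallels the argument in the polynomial case and adapts verbatim to $\cO_n=\mathbb{C}\{\{x_1,\ldots,x_n\}\}$.
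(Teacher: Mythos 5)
Your proposal is correct. The trivial implications $(3)\Rightarrow(1)$, $(2)\Leftrightarrow(3)$ are handled essentially as in the paper (the paper phrases $(3)\Rightarrow(2)$ via $X_n$-adic completion of $\cO_{n-1}[X_n]$, you via Krull in $\cO_n/J\cO_n$; both work), and for the key implication $(1)\Rightarrow(3)$ you correctly identify the obstacle --- the absence of a top $X_n$-coefficient --- and resolve it by the same essential strategy as the paper, namely bottom-up coefficient extraction closed off by Krull's intersection theorem. The mechanisms differ in an interesting way, though. The paper iterates the operators $v\mapsto v-\frac{1}{m!}X_n^m\partial_n^m(v)$, each of which preserves $I$ and annihilates exactly the $X_n^m$-coefficient while fixing the bottom one, so that $c_rX_n^r\in I+\m^{r+s+1}$ for all $s$ and a single application of Krull (followed by $\frac{1}{r!}\partial_n^r$) gives $c_r\in I$. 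You instead isolate the single operator $u\mapsto u+X_n\partial_nu=\partial_n(X_nu)$ in an auxiliary lemma ($u+X_n\partial_nu\in I$ forces $u\in I$), proved by its own induction $u\in X_n^k\cO_n+I$ plus Krull, and then peel off coefficients via $\xi=c_0+X_nu_0$, $\partial_n\xi=u_0+X_n\partial_nu_0$. Your route costs a second layer of induction but yields a cleaner, reusable statement and avoids the bookkeeping of the modified tails $c_j^{(r+s)}$; the paper's route is more direct but more computational. Both adapt to $\mathbb{C}\{\{x_1,\ldots,x_n\}\}$ exactly as you say, since only Noetherianity of the local ring and closure of the ring under $\xi\mapsto(\xi-\xi|_{X_n=0})/X_n$ are used.
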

\begin{proof}
$(1) \implies (3):$ Let $\xi = \sum_{j = r}^{\infty}c_j X_n^j \in I$ with  $c_j \in \cO_{n-1}$ for  $j \geq r$.
Put $v_r =\xi$ and $c_j^{(r)} = c_j$ for $j\geq r$.
Put 
$$v_{r+1} = v_{r} - \frac{1}{(r+1)!}X_n^{r+1}\partial_n^{r+1}(v_r) = c_r X_n^r + \sum_{j\geq r+2}c_j^{(r+1)}X_n^{j}.$$
Here $c_j^{(r+1)} \in \cO_{n-1}$ for  $j \geq r+2$. By hypothesis $v_{r+1} \in I$. 

Now suppose $v_r, v_{r+1},\ldots,v_{r+s} \in I$ have been constructed where
\[
v_{r+s} = c_r X_n^r + \sum_{j\geq r+s+1}c_j^{(r+s)}X_n^{j}.
\]
Put
$$v_{r+s+1} = v_{r+s} - \frac{1}{(r+s+1)!}X_n^{r+s+1}\partial_n^{r+s+1}(v_{r+s}) = c_r X_n^r + \sum_{j\geq r+s+2}c_j^{(r+s+1)}X_n^{j}.$$
Here $c_j^{(r+s+1)} \in \cO_{n-1}$ for  $j \geq r+s+2$. By hypothesis $v_{r+s+1} \in I$. 

Since $v_{r+s} \in I$ we have that $c_rX_n^r \in I + \m^{r+s+1}$ for all $s \geq 1$. By Krull's intersection theorem
we have $\cap_{s \geq 1}(I + \m^{r+s+1}) = I$. So $c_rX_n^r \in I$. Therefore
\[
c_r = \frac{1}{r!}\partial_n^r(c_rX_n^r) \in I
\]

Now notice that $\xi - c_rX_n^r = \sum_{j = r+1}^{\infty}c_j X_n^j \in I$. Iteratively one can prove that 
$c_j \in I$ for all $i \geq r$.

The assertion $(3) \implies (1)$ is trivial. We now show $(3) \implies (2)$. Let $\xi = \sum_{j = r}^{\infty}c_j X_n^j \in I$ with  $c_j \in \cO_{n-1}$ for  $j \geq r$. Then by hypothesis $c_j \in I$ for  $j \geq r$. Set $S = \cO_{n-1}[X_n]$.
So $\xi_m = \sum_{j = r}^{m} c_j X_n^j \in JS$ for all $m \geq r$. Let $\widehat \ $ denote completion \wrt \ $X_n$-adic toplogy. Note $\xi = \lim_{m} \xi_m \in \widehat{JS} = J\widehat{S} = J\cO_n$. It follows that $I \subseteq J\cO_n$.
The assertion $J\cO_n \subseteq  I$ is trivial. So $I = J\cO_n$.

The proof of $(2) \implies (3)$ is similar to the analogus assertion in Lemma \ref{ec}.
\end{proof}
We now generalize Lemma \ref{ec-P}.
\begin{corollary}\label{ec-P-4}
Let $P$ be a prime ideal in $R$ and let $I$ be an ideal in $R$ with $\sqrt{I} = P$. If $\partial_n(I) \subseteq I$ then $P = (P\cap R_{n-1})R$.
\end{corollary}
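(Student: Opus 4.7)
The plan is to mimic the proof of Corollary \ref{ec-P} essentially verbatim, with Lemma \ref{ec-4} playing the role of Lemma \ref{ec} throughout. Set $Q = P \cap \cO_{n-1}$. To establish $P = Q\cO_n$ via Lemma \ref{ec-4} (specifically the implication (3) $\implies$ (2) applied to $P$), it suffices to show that for every $\xi \in P$, when one expands $\xi = \sum_{j \geq 0} c_j X_n^j$ with $c_j \in \cO_{n-1}$, each coefficient $c_j$ lies in $P$. The argument extracts these coefficients one at a time.

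Let $\xi \in P$ and write $\xi = \sum_{j \geq r} c_j X_n^j$ with $c_r \neq 0$. Since $\sqrt{I} = P$, there exists $s \geq 1$ with $\xi^s \in I$, and $\xi^s = c_r^s X_n^{sr} + (\text{higher-order terms in } X_n)$. The hypothesis $\partial_n(I) \subseteq I$ allows us to apply Lemma \ref{ec-4} to $I$: every coefficient of $\xi^s$, viewed as a power series in $X_n$ with coefficients in $\cO_{n-1}$, lies in $I$. In particular $c_r^s \in I \subseteq P$, and primeness of $P$ gives $c_r \in P$. Now $\xi - c_r X_n^r \in P$ has strictly larger $X_n$-order, and the same argument extracts its lowest nonzero coefficient. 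A straightforward induction on the $X_n$-order yields $c_j \in P$ for every $j$, and then Lemma \ref{ec-4} applied to $P$ delivers $P = Q\cO_n$.

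The only conceptual difference from the polynomial case is that the expansion of $\xi$ has infinitely many terms, so the iteration above does not terminate in finitely many steps. This is harmless: we need only that each individual coefficient $c_j$ lie in $P$, and each specific $c_j$ is reached after a finite number of iterations (bounded in terms of $j - r$). The genuinely new analytic ingredient — passing from membership of a power series in an ideal to membership of each of its $\cO_{n-1}$-coefficients — has already been absorbed into Lemma \ref{ec-4}, whose proof used Krull's intersection theorem to control the $X_n$-adic completion. Consequently no further completion or convergence argument is needed here; the corollary is a formal consequence of Lemma \ref{ec-4}.
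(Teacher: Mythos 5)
Your proof is correct and is essentially identical to the paper's own argument: extract the lowest coefficient $c_r$ via $\xi^s\in I$ and Lemma \ref{ec-4} applied to $I$, use primeness to get $c_r\in P$, iterate on $\xi-c_rX_n^r$, and conclude with the implication $(3)\Rightarrow(2)$ of Lemma \ref{ec-4} applied to $P$. Your remark that the non-terminating iteration is harmless because each fixed coefficient is reached in finitely many steps is a point the paper leaves implicit, but there is no substantive difference.
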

\begin{proof}
Set $Q = P \cap \cO_{n-1}$. Let $\xi \in P$. Let $\xi = \sum_{j = r}^{\infty}c_jX_n^j$ where $c_j \in \cO_{n-1}$ for  $j \geq r$. Notice $\xi^s \in I$ for some $s \geq 1$. Also $\xi^s = c_r^{s}X_n^{sr} + .. $ higher terms in $X_n$. By Lemma \ref{ec-4} we get that $c_r^s \in I$. It follows that $c_r \in P$.
Thus $\xi - c_rX_n^r \in  P$. Iterating we obtain that $c_j \in P$ for all $j \geq r$.
So by Lemma \ref{ec-4} we get that $P = QR$.
\end{proof}

\begin{remark}\label{simple-th4}
Theorem 7.1 generalizes to the case of $\cD_n$ modules. The proof is the same.
\end{remark}

\begin{remark}\label{ht-contrac}
We now genralize Remark 7.4. Let $P$ be a prime ideal in $\cO_n$. Set $Q = P\cap \cO_{n-1}$. It is elementary that
$$\hh_{\cO_{n-1}}Q \leq \hh_{\cO_n} P \quad \text{with equality if and only if} \ P = Q\cO_{n}. $$
However the assertion $\hh Q \geq \hh P -1$ requires a proof. I thank J. K. Verma for providing this proof.
Note that $\hh Q = \hh Q\cO_{n}$. Set $A = \cO_{n-1}/Q$ and  $B = \cO_{n}/Q\cO_{n} = A[[X_n]]$. Set $\n = P/Q\cO_{n}$.
Let $S$ be the non-zero elements of $A$. Then $\n \cap S = \emptyset$. So $\hh \n = \hh \n S^{-1}B$. Let $L =$ quotient field of $A$. Then $S^{-1}B = L[[X_n]]$. It follows that $\hh \n \leq 1$. Therefore $\hh P - \hh Q \leq 1$. The result follows.
\end{remark}
For stating our generalization of Lemma 7.8 we need the following result:
\begin{proposition}\label{add}
 Let $0 \rt N \rt M \rt L \rt 0$ be a short exact sequence 
of holonomic $\cD_n$-modules. The following are equivalent:
\begin{enumerate}[\rm (1)]
\item
$H_i(\partial_n; M)$  are  holonomic $\cD_{n-1}$-module for $i = 0,1$.
\item
$H_i(\partial_n; N)$,  $H_i(\partial_n; M)$  are   holonomic $\cD_{n-1}$-modules for $i = 0,1$.
\end{enumerate} 
\end{proposition}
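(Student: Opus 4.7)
\textbf{Plan for Proposition \ref{add}.} Since condition (2) explicitly includes the assertion of (1), the direction $(2) \Rightarrow (1)$ is trivial; all the content lies in $(1) \Rightarrow (2)$. The strategy is to extract information about $H_i(\partial_n; N)$ from the long exact sequence of Koszul homology attached to the short exact sequence $0 \rt N \rt M \rt L \rt 0$,
\[
0 \rt H_1(\partial_n; N) \rt H_1(\partial_n; M) \rt H_1(\partial_n; L) \rt H_0(\partial_n; N) \rt H_0(\partial_n; M) \rt H_0(\partial_n; L) \rt 0,
\]
combined with the unconditional holonomicity of $H_1(\partial_n; -)$ on holonomic $\cD_n$-modules recorded in Remark 8.1 (citing \cite{v1}).

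The $H_1$-part of (2) is essentially free: applying Remark 8.1 to the holonomic module $N$ directly yields that $H_1(\partial_n; N)$ is a holonomic $\cD_{n-1}$-module. The real work is $H_0(\partial_n; N)$. From the long exact sequence one extracts
\[
H_1(\partial_n; L) \xrightarrow{\alpha} H_0(\partial_n; N) \xrightarrow{\beta} H_0(\partial_n; M).
\]
Applying Remark 8.1 to $L$ shows $H_1(\partial_n; L)$ is holonomic, so $\image \alpha$ (a quotient of a holonomic module) is holonomic. Similarly $H_0(\partial_n; N)/\image \alpha \cong \image \beta$ embeds in $H_0(\partial_n; M)$, which is holonomic by hypothesis (1); hence $\image \beta$ is holonomic. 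Since the class of holonomic $\cD_{n-1}$-modules forms a Serre subcategory (closed under subobjects, quotients and extensions), $H_0(\partial_n; N)$ is sandwiched as an extension of two holonomic modules and is therefore holonomic, completing $(1) \Rightarrow (2)$.

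The only potential obstacle is verifying that the two ingredients needed, namely preservation of holonomicity under $H_1(\partial_n; -)$ and the Serre subcategory property for holonomic $\cD_{n-1}$-modules, are available in both of the settings $\cO_n = K[[X_1,\ldots,X_n]]$ and the convergent power series case. Both facts are standard in the cited references \cite{v1}, \cite{v2}, \cite{v3} from Remark 8.1, so no essentially new technique is required. Note as a byproduct that the same argument (using that $H_0(\partial_n; L)$ is a quotient of $H_0(\partial_n; M)$) shows $L$ automatically inherits holonomicity of its Koszul homologies, even though it is not listed in condition (2).
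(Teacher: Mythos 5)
Your proof of $(1)\Rightarrow(2)$ is essentially the paper's own argument: the six-term Koszul homology sequence, the unconditional holonomicity of $H_1(\partial_n;-)$ from \cite{v1}, and closure of holonomic $\cD_{n-1}$-modules under subquotients and extensions. One caveat: condition (2) as printed is almost certainly a misprint for ``$H_i(\partial_n;N)$, $H_i(\partial_n;L)$ are holonomic'' --- the paper's own proof of $(2)\Rightarrow(1)$ invokes the exact sequence (which would be pointless if (2) literally contained (1)), and the application in Lemma \ref{7.9-th4} needs exactly the implication ``$N$ and $L$ good $\Rightarrow$ $M$ good'' to run the induction along the composition series. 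So your dismissal of $(2)\Rightarrow(1)$ as trivial is valid only for the statement as literally written; for the intended statement you would need to add that $H_0(\partial_n;M)$ is an extension of $H_0(\partial_n;L)$ by a quotient of $H_0(\partial_n;N)$, hence holonomic --- which your Serre-subcategory setup handles immediately, and your closing remark about $L$ already supplies the other half of the corrected equivalence. No mathematical gap, but the statement you proved is not quite the one the paper uses.
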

\begin{proof}
 Let $E$ be a holonomic $\cD_n$-module. Then $H_1(\partial_n; E)$ is a holonomic $\cD_{n-1}$-module; see \cite{v1}. 
 Note that we have an exact sequence of $\cD_{n-1}$-modules
 \[
 H_1(\partial;L) \rt H_0(\partial;N) \rt H_0(\partial;M) \rt H_0(\partial;L) \rt 0.
 \]
   $(2) \implies (1):$   By the above exact sequence $H_0(\partial;M)$ is a holonomic $\cD_{n-1}$-module.
  
   We now prove $(1) \implies (2)$. Note that $H_1(\partial;L)$ is holonomic $\cD_{n-1}$-module. By the above exact sequence $H_0(\partial;N)$ is a holonomic $\cD_{n-1}$-module. Furthermore $H_0(\partial;L)$ is a subquotient of $H_0(\partial;M)$ and so it is holonomic.
\end{proof}
The correct statement which generalizes Lemma 7.8 is the following:
\begin{lemma}\label{7.9-th4}
Let 
\[
0 = V_0 \subsetneq V_1 \subsetneq V_2 \subsetneq \cdots \subsetneq V_c = M. 
\]
be a composition series of a holonomic-module $M$. 
For $i =1,\ldots,c$ set $C_i = V_i/V_{i-1}$. Let $C = \bigoplus_{i = 1}^{c}C_i$. Suppose we have a change of variables with $H_i(\partial_n; C)$ holonomic $\cD_{n-1}$ module for $i = 0,1$. Then
\begin{enumerate}[\rm(1)]
\item
$H_i(\partial_n;C_j)$ are holonomic $\cD_{n-1}$ module for $i = 0,1$ and $j = 1,\ldots,c$.
\item
$H_i(\partial_n; M)$  are  holonomic $\cD_{n-1}$-module for $i = 0,1$.
\item
$\displaystyle{\dim H_0(\partial_n; M) \leq \max_{i}\{\dim H_0(\partial_n; C_i) \} \leq \dim M.}$
\item
$\displaystyle{\dim H_1(\partial_n; M) \leq \max_{i}\{\dim H_1(\partial_n; C_i) \} \leq \dim M-1.}$
\end{enumerate}
\end{lemma}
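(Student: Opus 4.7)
The plan is to transcribe the proof of Lemma \ref{dim-lemm} into the power-series setting, using the generalizations provided by Remarks \ref{simple-th4} and \ref{ht-contrac} together with Proposition \ref{add} to control holonomicity throughout. The structure mirrors Lemma \ref{dim-lemm} exactly; the only new issue is that, in the $\cD_n$ setting, holonomicity of $H_0(\partial_n; -)$ is not automatic and must be propagated through the short exact sequences by hand.

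For part (1), Koszul homology commutes with finite direct sums, so
\[
H_i(\partial_n; C) = \bigoplus_{j=1}^{c} H_i(\partial_n; C_j) \quad (i=0,1).
\]
Since the class of holonomic $\cD_{n-1}$-modules is closed under taking submodules (and hence direct summands), each $H_i(\partial_n; C_j)$ is holonomic. For part (2) I induct on $j$ to show $H_i(\partial_n; V_j)$ is holonomic for $i=0,1$. The case $j=1$ is exactly (1) applied to $C_1 = V_1$. For the inductive step, the short exact sequence $0 \to V_{j-1} \to V_j \to C_j \to 0$ of holonomic $\cD_n$-modules, combined with the inductive hypothesis and (1), fulfills the assumption of Proposition \ref{add} (in its $(2) \Rightarrow (1)$ direction), so $H_i(\partial_n; V_j)$ is holonomic.

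For parts (3) and (4), I carry over the argument of Lemma \ref{dim-lemm} verbatim, using the long exact sequence
\begin{align*}
0 &\to H_1(\partial_n; V_{j-1}) \to H_1(\partial_n; V_j) \to H_1(\partial_n; C_j) \\
  &\to H_0(\partial_n; V_{j-1}) \to H_0(\partial_n; V_j) \to H_0(\partial_n; C_j) \to 0.
\end{align*}
The first inequality in each part follows by the same localization trick: if $Q \subset \cO_{n-1}$ is a prime of height less than the relevant bound, the outer terms vanish after localizing at $Q$ by definition of the max, so a straightforward induction on $j$ via the long exact sequence forces $H_i(\partial_n; M)_Q = 0$, and \ref{dim-ineq} gives the desired bound on $\dim H_i(\partial_n; M)$.

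For the second inequality in (3), let $\operatorname{Ass} C_j = \{P_j\}$, $d_j = \hh P_j$, and $Q_j = P_j \cap \cO_{n-1}$. By the power-series version of Remark \ref{mod} (which goes through without change), $H_0(\partial_n; C_j)$ is $Q_j$-torsion, and by Remark \ref{ht-contrac} we have $\hh Q_j \geq d_j - 1$; thus $\dim H_0(\partial_n; C_j) \leq n - d_j \leq \dim M$ by \ref{dim-simple}. For the second inequality in (4), if $H_1(\partial_n; C_j) \neq 0$ then the generalized Theorem \ref{basic-simple} (Remark \ref{simple-th4}) forces $P_j = Q_j\cO_n$, and then Remark \ref{ht-contrac} gives equality $\hh Q_j = d_j$, yielding $\dim H_1(\partial_n; C_j) \leq n - 1 - d_j \leq \dim M - 1$. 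The main (only) obstacle is the bookkeeping in part (2): one must verify that the change of variables assumed for $C$ does make each ingredient in the inductive application of Proposition \ref{add} holonomic, which is precisely why (1) is established first and stated separately.
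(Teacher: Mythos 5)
Your proposal is correct and follows the same route as the paper, which simply says that (1) and (2) follow from Proposition \ref{add} and that (3) and (4) are proved as in Lemma \ref{dim-lemm}; you have merely filled in the details (direct-sum splitting plus closure of holonomic $\cD_{n-1}$-modules under subquotients for (1), induction along the filtration via Proposition \ref{add} for (2), and the localization argument together with Remarks \ref{simple-th4} and \ref{ht-contrac} for (3) and (4)). No gaps.
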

\begin{proof}
The assertions (1) and (2) follow from Proposition \ref{add}. The proof of assertions (3) and (4) is similar to that of (1) and (2) in Lemma 7.8.
\end{proof}
We now give
\begin{proof}[Proof of Theorem 4]
Let 
\[
0 = V_0 \subsetneq V_1 \subsetneq V_2 \subsetneq \cdots \subsetneq V_c = M. 
\]
be a composition series of a holonomic-module $M$. 
For $i =1,\ldots,c$ set $C_i = V_i/V_{i-1}$. Let $C = \bigoplus_{i = 1}^{c}C_i$. Choose a change of variables with $H_i(\partial_n; C)$ holonomic $\cD_{n-1}$ module for $i = 0,1$. Then by Lemma \ref{7.9-th4} we have that 
$H_i(\partial_n;C_j)$ are holonomic $\cD_{n-1}$ module for $i = 0,1$ and $j = 1,\ldots,c$. Furthermore
$H_i(\partial_n; M)$  are  holonomic $\cD_{n-1}$-module for $i = 0,1$.

After this choice of variables the proof of Theorem 4 is now identical to proof of Theorem 3.
\end{proof}

\end{document}